\newcommand{\N}{\mathbb{N}}
\DeclarePairedDelimiter\ceil{\lceil}{\rceil}
\DeclarePairedDelimiter\floor{\lfloor}{\rfloor}
\newtheorem{theorem}{Theorem}
\newtheorem{lemma}[theorem]{Lemma}
\newtheorem{corollary}[theorem]{Corollary}
\DeclareMathSymbol{\lsb@l}{\mathalpha}{letters}{`l}
\begin{document}
\title{Ramsey properties of randomly perturbed dense graphs}
\thanks{A large part of this work forms the Bachelor thesis of the author which was supervised by Mathias Schacht (schacht@math.uni-hamburg.de) and handed in at the Department of Mathematics at the University of Hamburg in March 2018. While we prepared this manuscript, Das and Treglown \cite{DasTreglown} independently obtained similar and more general results (see Section \ref{ourresults})}.
\author{Emil Powierski}
\email{emil.powierski@studium.uni-hamburg.de}

\begin{abstract}
We investigate Ramsey properties of a random graph model in which random edges are added to a given dense graph. Specifically, we determine lower and upper bounds on the function $p=p(n)$ that ensures that for any dense graph~$G_n$ a.a.s.\ every 2-colouring of the edges of $G_n\cup G(n,p)$ admits a monochromatic copy of the complete graph $K_r$. These bounds are asymptotically sharp for the cases when $r\geq 5$ is odd and almost sharp when $r \geq 4$ is even. 
Our proofs utilise recent results on the threshold for asymmetric Ramsey properties in $G(n,p)$ and the method of dependent random choice.
\end{abstract}

\maketitle 

\section{Introduction} 

\subsection{Random graphs and randomly perturbed dense graphs}
For $n \in \N$ and $0\leq p \leq 1$ we denote by $G(n,p)$ the binomial random graph on~$n$ vertices where every edge is present with probability $p$ independently of all other choices. As usual, we say that an event happens \textit{asymptotically almost surely (a.a.s.)} if it holds with probability tending to $1$ as $n \rightarrow \infty$.
Given a graph property $\mathcal{P}$, it has been a key question to find a \textit{threshold function}, a function $p^{*} \colon \N \to [0,1]$ ensuring that $G(n,p)$ a.a.s.\ satisfies $\mathcal{P}$ when $p=\omega(p^*)$ and a.a.s.\ does not satisfy $\mathcal{P}$ when $p=o(p^*).$ 

Bohman, Frieze and Martin \cite{bohman2003many} considered a model that combines deterministic graphs and random graphs:
In that model of randomly perturbed graphs one starts with an arbitrary dense graph and adds edges in a random manner. 
More precisely, given $\gamma>0$, we say that a graph $G=(V,E)$ is \textit{$\gamma$-dense} if $|E| \geq \gamma |V|^2.$  Furthermore, we say that $(\gamma,p)$ \textit{ensures} a property $\mathcal{A}$ if
$$\tau_p^{\mathcal{A}}= \lim_{n \to \infty} \min_{G_n} \, \Pr(G_n \cup G(n,p(n)) \text{ satisfies } \mathcal{A})=1,$$
where the minimum is taken over all $\gamma$-dense graphs on the same vertex set as $G(n,p(n))$.
For a fixed $\gamma>0$, we say that a function $p^*$ is a \textit{threshold} for $\mathcal{A}$ (in the context of randomly perturbed dense graphs) if $\tau_p^{\mathcal{A}}=1$ for $p=\omega(p^*)$ and $\tau_p^{\mathcal{A}}=0$ for $p=o(p^*).$ Throughout, we will assume that $\gamma>0$ is some fixed and small constant.

Stricly speaking, working in this model requires to consider sequences of $\gamma$-dense graphs $(G_n)_{n \in \N}$. However, for a better presentation, we suppress the sequences and similarly we simply write $p$ for $p(n)$.

Recently, several thresholds for this model have been studied in \cite{bohman2004adding,krivelevich2006smoothed, bottcher2017embedding, krivelevich2017bounded, balogh2018tilings, krivelevich2016cycles, bedenknecht2018powers, bennett2017adding, bottcher2018universality, han2018hamiltonicity, joos2018spanning, mcdowell2018hamilton, dudek2018powers}.
Most of the analysis centered around ensuring spanning structures such as trees or (powers of) cycles. 
Krivelevich, Sudakov and Tetali \cite{krivelevich2006smoothed} already investigated  Ramsey properties of this model (see Section \ref{subsRP} below). We continue this line of research (see Section~\ref{ourresults}).

\subsection{Ramsey properties of random graphs}
For graphs $G, H_1, \dots ,H_k$, we denote by $G \rightarrow (H_1,\dots,H_k)$ the Ramsey-type statement that every colouring of $E(G)$ with colours $\{1 \dots k\}$ yields a monochromatic copy of $H_i$ in colour $i$ for some $i$. In the symmetric case when $H_1=\dots =H_k$, we simply write $G \rightarrow (H)_k$ and if additionally $k=2$, then we write $ G \rightarrow (H)$. Using this notation, Ramsey's theorem states that for all $k, \ell \in \N$ there exists some $n \in \N$ such that $K_n \rightarrow \left(K_{\ell}\right)_k$.

R{\"o}dl and Ruci{\'n}ski established the threshold for the property $G(n,p) \rightarrow~(H)$ which for every fixed graph $H$. 
For a graph $H=(V,E)$ we define $$d_{2}(H)=
\begin{cases}
\frac{|E|-1}{|V|-2} & \text{ if } |V|\geq 3 \land |E|>0 \\
\frac{1}{2} & \text{ if } H \cong K_2 \\
0 & \text{ if } |E|=0
\end{cases}
$$ 
and we let $m_2(H)$ denote the \textit{2-density}, defined by $m_{2}(H)= \max_{J\subseteq H} d_2(J)$.
The following is a slightly simplified version of the result mentioned 
above:

\begin{theorem}[R{\"o}dl and Ruci{\'n}ski \cite{rodl1993lower,rodl1995threshold}]\label{RRT}
Let $k\geq 2$ be an integer and let $H$ be a graph that is not a forest. Then there exist real constants $c$, $C>0$ such that
$$
\lim\limits_{n \rightarrow \infty} \Pr(G(n,p)\rightarrow(H)_{k})=
\begin{cases}
0 \text{ \, \, if } p=p(n) \leq cn^{-1/m_{2}(H)} \\
1 \text{ \, \, if } p=p(n) \geq Cn^{-1/m_{2}(H)}
\end{cases}
$$
\end{theorem}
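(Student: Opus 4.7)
The plan is to handle the two directions separately, following the standard strategy. The easier \emph{0-statement} requires constructing, with positive probability, a $k$-colouring of $G(n,p)$ that avoids monochromatic copies of $H$. Let $J \subseteq H$ attain the maximum in the definition of $m_2(H)$, so that $m_2(H) = (e(J)-1)/(v(J)-2)$. The scaling $p = cn^{-1/m_2(H)}$ is chosen precisely so that the expected number of pairs $(e, H')$ with $H'$ a copy of $H$ in $G(n,p)$ and $e \in E(H')$ is of the same order as the number of edges of $G(n,p)$. My plan would be to delete from $G(n,p)$ all edges that lie in too many copies of $H$ (a negligible fraction by Markov), then $k$-colour the surviving edges greedily, exploiting that each edge lies in only constantly many ``dangerous'' copies, and finally redistribute the deleted edges among colour classes without creating a monochromatic $H$. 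The required second-moment estimate on the copy-hypergraph of $H$ in $G(n,p)$, together with Janson's inequality or a direct alteration argument, is the technical heart of this step.

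For the \emph{1-statement} I would start from an arbitrary $k$-colouring of $E(G(n,p))$ and pass to its densest colour class, which contains at least a $1/k$ fraction of all edges. I would then invoke the sparse regularity lemma of Kohayakawa and R{\"o}dl to produce a regular partition of this class, extract from the reduced graph a configuration corresponding to a homomorphic copy of $H$, and finally apply an embedding/counting lemma for copies of $H$ in sparse $(\varepsilon,p)$-regular tuples to lift this to an honest monochromatic copy of $H$. The threshold $p \ge Cn^{-1/m_2(H)}$ is dictated by precisely this last step: it is the weakest density at which the copy count in sparse regular pairs concentrates around its expectation for every subgraph $J \subseteq H$.

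The hardest step by a wide margin is the embedding/counting lemma for $H$ in a sparse random environment; establishing it in full generality is the content of the (former) KLR conjecture, whose proof only became available later through the hypergraph container method of Balogh--Morris--Samotij and Saxton--Thomason, or alternatively via the sparse regularity approach of Conlon--Gowers and Schacht. Any complete proof of the theorem must therefore either cite these deep external tools or redo a substantial piece of that machinery. The 0-statement and the regularity-based skeleton of the 1-statement are, by comparison, of standard difficulty and require no fundamentally new ideas.
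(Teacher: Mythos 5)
You should first note that the paper offers no proof of this statement at all: Theorem~\ref{RRT} is quoted as a black box from R{\"o}dl and Ruci{\'n}ski \cite{rodl1993lower,rodl1995threshold}, so there is no internal argument to compare against, and your proposal has to be judged as a proof sketch of the cited theorem itself. On that footing, your outline of the 1-statement (pass to the majority colour, apply the sparse regularity lemma of Kohayakawa--R{\"o}dl, and finish with a counting/embedding lemma in sparse regular tuples) is a legitimate modern route, but it defers essentially the entire difficulty to the K{\L}R counting lemma, which only became a theorem via hypergraph containers long after R{\"o}dl and Ruci{\'n}ski's original argument; their own proof of the 1-statement is a self-contained (and substantially different) argument, so you should be explicit that you are proposing an anachronistic alternative that imports a very heavy external theorem rather than reconstructing the original proof. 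One smaller point there: with only a $1/k$ fraction of the edges in one colour, the reduced graph of a single colour class need not contain $H$; you need to regularise all $k$ colour classes simultaneously and combine Ramsey's theorem for the (multi-)coloured reduced graph with supersaturation before the embedding step.

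The genuine gap is in your 0-statement. First, a quantifier error: to get $\lim_n \Pr(G(n,p)\to(H)_k)=0$ you must exhibit a valid colouring asymptotically almost surely, not merely with positive probability. More seriously, the proposed mechanism --- delete the edges lying in ``too many'' copies of $H$ and then colour greedily because each surviving edge lies in only constantly many copies --- does not work at the threshold. At $p=cn^{-1/m_2(H)}$ the expected number of copies of $H$ through a fixed edge is of constant order, so a constant fraction of \emph{all} edges lie in at least one copy, these copies overlap and chain into connected ``clusters'' of unbounded total size, and a greedy edge-colouring has no obvious reason to terminate without creating a monochromatic copy inside such a cluster; nor is it clear how to ``redistribute the deleted edges'' without recreating copies. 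The actual content of \cite{rodl1993lower} is precisely a structural classification of these clusters showing that a.a.s.\ each one admits a proper local colouring that can be glued globally, and this analysis (which is where the hypothesis that $H$ is not a forest enters) is entirely absent from your sketch. As written, the 0-statement half of your proposal is not a proof outline but a restatement of the difficulty.
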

Recently, one focus of research in this area is to establish thresholds for asymmetric Ramsey properties. Interestingly enough, some of the recent discoveries will play a key role in the proofs of our results and  will be introduced in Section~\ref{proofs}.

\subsection{Ramsey properties of randomly perturbed dense graphs}\label{subsRP}
Concerning the newer model of randomly perturbed graphs, a first reasonable question to address is whether the R{\"o}dl-Ruci{\'n}ski threshold can be improved at all.
In fact this cannot be achieved for $k\geq 3$ (i.e. more than 2 colours) and small~$\gamma $: In case of $\gamma n^2 \leq ex(n,H)$ (for example, when $\gamma<\frac{1}{4}$ and $H$ is a clique) there exists an $H$-free, $\gamma$-dense graph $G_n$. Then we can assign one colour to all edges from $G_n$ without admitting a monochromatic copy of $H$ in that colour. We still have at least two unused colours left to cope with the edges of the random graph, so we will be able to colour the remaining edges without admitting a monochromatic copy of $H$, unless we have $G(n,p) \rightarrow (H)_2$. By Theorem~\ref{RRT} a threshold for $G(n,p)\rightarrow(H)_k$ is also a threshold for $G(n,p) \rightarrow (H)_2$ and thereby for $G_n \cup G(n,p)\rightarrow (H)_k$ by the above consideration. Hence, we will focus on the case $k=2$ only. We first recall the Ramsey result from \cite{krivelevich2006smoothed}.

\begin{theorem}[Krivelevich, Sudakov and Tetali \cite{krivelevich2006smoothed}] \label{K3KtThm}
\begin{enumerate}[label=(\roman*)]
\item If $p=\omega(n^{-2/(t-1)})$, then for any $0<\gamma <1$, any integer $t \geq 3$ and any $\gamma$-dense $n$-vertex graph $G_n$ we a.a.s have  
 $$ G_n \cup G(n,p)\shortrightarrow(K_3,K_t).$$
 \item
If $p =o(n^{-2/(t-1)})$, then for any constant $0<\gamma<\frac{1}{4}$ and for every $t \geq 3$ there exists a $\gamma$-dense $n$-vertex graph $G_n$ such that we a.a.s.\ have
$$G_n \cup G(n,p)\nrightarrow(K_3,K_t).$$
\end{enumerate}
\end{theorem}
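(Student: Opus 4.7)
\emph{Part (ii) --- the $0$-statement.} The plan is to exhibit one explicit extremal pair. Take $G_n = K_{\lfloor n/2 \rfloor, \lceil n/2 \rceil}$: it has $\lfloor n^{2}/4 \rfloor$ edges and is $\gamma$-dense for every $\gamma < 1/4$, and it is triangle-free. Colour every edge of $G_n$ red and every edge of $G(n,p)$ blue. The red graph, being bipartite, has no $K_3$. Since $p = o(n^{-2/(t-1)})$ lies strictly below the appearance threshold for $K_t$ in $G(n,p)$, the first-moment estimate $\binom{n}{t} p^{\binom{t}{2}} = o(1)$ shows that a.a.s.\ the blue graph contains no $K_t$ either, so $\tau_p^{\mathcal{A}} = 0$.

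\emph{Part (i) --- the $1$-statement.} Assume $p = \omega(n^{-2/(t-1)})$ and that $G_n$ is an arbitrary $\gamma$-dense graph. Proceed by contradiction: fix a $2$-colouring of $E(G_n \cup G(n,p))$ with no red $K_3$ and no blue $K_t$, and let $R$, $B$ denote the two monochromatic subgraphs. The pivotal observation is that, since $R$ is $K_3$-free, for every vertex $v$ the set $N_R(v)$ is red-independent, so \emph{every} edge of $G_n \cup G(n,p)$ with both endpoints in $N_R(v)$ must be blue. We will use this to ``lift'' a red $G_n$-star into a potential blue clique.

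\textbf{Case analysis.} Pick a vertex $v$ with $\deg_{G_n}(v) \geq 2\gamma n$ (one exists by $\gamma$-density) and split $N_{G_n}(v) = A \sqcup D$ into red-from-$v$ neighbours ($A$) and blue-from-$v$ neighbours ($D$). By pigeonhole, one of $|A|, |D|$ is at least $\gamma n$. If $|A| \geq \gamma n$, then by the pivotal observation $G_n[A] \cup G(n,p)[A]$ is entirely blue, so any $K_t$ therein is the desired blue $K_t$. If instead $|D| \geq \gamma n$, then every edge $vw$ with $w \in D$ is blue in $G_n$, so it suffices to find a blue $K_{t-1}$ inside $G_n[D] \cup G(n,p)[D]$; one then attempts to recurse on $t$. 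The base case $t = 3$ reduces to producing a single blue edge inside $D$: if every edge of $G_n[D] \cup G(n,p)[D]$ were red, then a.a.s.\ this graph would contain a triangle (by first moment plus Janson), contradicting the $K_3$-freeness of $R$.

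\textbf{Expected main obstacle.} The technical heart lies in the first sub-case: one needs to show that, a.a.s., \emph{every} linear-sized $U \subseteq V$ satisfies $G_n[U] \cup G(n,p)[U] \supseteq K_t$. For fixed $U$, Janson's inequality gives failure probability at most $\exp(-\Theta(|U|^{t} p^{\binom{t}{2}}))$, but at the stated threshold the exponent is only $\omega(1)$, far too weak to absorb the $2^{n}$-sized union bound over $U$. The argument must therefore genuinely exploit the interplay between the dense host graph $G_n$ and the random edges --- for example, by using Tur\'an-type supersaturation to find many copies of $K_{t-1}$ inside dense portions of $G_n[U]$ and closing each via a single random edge, or by applying dependent random choice to localise the search inside a carefully chosen common neighbourhood. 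The same interplay complicates the induction step of the second sub-case, since $G_n[D]$ need not be $\gamma$-dense relative to $|D|$, so a more refined choice of $v$ or a direct (non-inductive) handling of both sub-cases together is likely required.
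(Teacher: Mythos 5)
The paper states this theorem as a quoted result of Krivelevich, Sudakov and Tetali and gives no proof of it, so there is no internal argument to compare your attempt against; I assess it on its own terms.

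Your part (ii) is correct and is the expected construction: $K_{\lceil n/2\rceil,\lfloor n/2\rfloor}$ is $\gamma$-dense for every $\gamma<\tfrac14$ and large $n$, colouring its edges red (including any edges it shares with $G(n,p)$) keeps the red graph bipartite and hence triangle-free, and the remaining blue edges form a subgraph of $G(n,p)$, which a.a.s.\ contains no $K_t$ since $\binom{n}{t}p^{\binom{t}{2}}=o(1)$ for $p=o(n^{-2/(t-1)})=o(n^{-1/\rho(K_t)})$. This mirrors the paper's proof of Theorem~\ref{mainresult0}.

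Part (i), however, is not a proof: you set up a case distinction and then explicitly leave its decisive step open, and your own diagnosis of the difficulty is accurate, so the gap is genuine rather than cosmetic. Concretely, for $p=\omega(n^{-2/(t-1)})$ and linear-sized $U$ one only has $|U|^{t}p^{\binom{t}{2}}=\omega(1)$, so Janson's inequality cannot survive a union bound over the exponentially many candidate sets $A$ (which depend on the adversarial colouring, which in turn may depend on $G(n,p)$, so they are effectively arbitrary); the statement ``every linear set spans a $K_t$ of $G(n,p)$'' is false at this density. What \emph{is} available is the statement one clique smaller: $|U|^{t-1}p^{\binom{t-1}{2}}=\omega(n)$, so Theorem~\ref{Janson} plus a union bound gives that a.a.s.\ every $U$ with $|U|\geq\delta n$ spans a $K_{t-1}$ of $G(n,p)$. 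Any successful argument must therefore assemble the blue $K_t$ as an apex vertex $w$ together with a $K_{t-1}$ of $G(n,p)$ found inside a linear set $S$ that is simultaneously red-independent (forcing that $K_{t-1}$ to be blue) and contained in the blue neighbourhood of $w$.

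Measured against that, your sub-case $|A|\geq\gamma n$ supplies red-independence but no apex: if $G_n[A]$ has no edges (e.g.\ $G_n$ bipartite with $A$ inside one class), you are thrown back on the false ``$K_t$ in every linear set of $G(n,p)$''. Your sub-case $|D|\geq\gamma n$ supplies the apex $v$ but not red-independence, and the recursion on $t$ does not close: finding a blue $K_{t-1}$ in $D$ is a $(K_3,K_{t-1})$-Ramsey problem on a set in which $G_n$ may induce no edges at all, and for the pure random graph that property requires $p\geq n^{-1/m_2(K_3,K_{t-1})}\gg n^{-2/(t-1)}$. The missing idea is a device for extracting a linear red-independent subset of a blue neighbourhood --- for instance a maximal matching $M$ in the red graph on $D$, whose complement $D\setminus V(M)$ is red-independent and still linear when $M$ is small, with a separate use of the triangle-freeness of red when $M$ is large. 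As written, the proposal does not establish the $1$-statement.
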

Note that in particular this shows that  $p(n)=n^{-1}$ is a threshold function for $G_n \cup G(n,p) \rightarrow (K_3)$. 
\subsection{Our results}\label{ourresults}

As mentioned above, our proofs are based on asymmetric Ramsey results and these involve the \textit{asymmetric 2-density} $m_2(G,H)$. For two graphs~$G$ and~$H$, both having at least one edge, let 
\begin{align*}
d_2(G,H)=\frac{|E(H)|}{|V(H)|-2+1/m_2(G)}
\end{align*}
and let $$m_2(G,H)=\max_{J\subseteq H} d_2(G,J)\text{\,.}$$
Then our first result reads as follows.
\begin{theorem} \label{mainresult1}
Let $\gamma>0$ be a real constant and let $r\geq 5$ be an integer.
\begin{enumerate}[label=(\roman*)]
\item \label{mainresult1even}
Let $\varepsilon >0$ be a constant and $p=\omega\left(n^{-(1-\varepsilon) /m_{2}\left(K_{\ceil{r/2}},K_{r}\right)}\right)$. For any $\gamma$-dense $n$-vertex graph $G_n$ we a.a.s.\ have $$G_{n}\cup G(n,p)\shortrightarrow(K_r).$$
\item \label{mainresult1odd}
Let $r$ be odd.
Then there exists a real constant $C>0$ such that the following holds for  $ p\geq Cn^{-1/m_{2}(K_{\frac{r+1}{2}},K_{r})}$. For any $\gamma$-dense $n$-vertex graph $G_n$ we a.a.s.\ have 
$$G_{n}\cup G(n,p)\shortrightarrow(K_r).$$
\end{enumerate}
\end{theorem}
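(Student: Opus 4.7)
The plan is to combine dependent random choice (DRC) applied to $G_n$ with the asymmetric Ramsey threshold theorem (to be introduced in Section~\ref{proofs}), which asserts that $G(n,p)\to(K_s,K_r)$ a.a.s.\ for $p\geq Cn^{-1/m_2(K_s,K_r)}$. The driving combinatorial observation is that, with $s=\ceil{r/2}$, one has $2s\geq r$, so a red $K_r$ can be assembled from a red $K_s$ together with a red $K_{r-s}$ (where $r-s\leq s$) whose mutual edges are all red.

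Given any red/blue colouring of $E(G_n)\cup E(G(n,p))$, I would first use pigeonhole to single out a colour, say red, that accounts for at least $\gamma n^2/2$ of the edges of $G_n$. Applying DRC to this linearly dense red subgraph of $G_n$ would yield a set $U\subseteq V$ of linear size such that every $s$-subset $T\subseteq U$ has at least $\eta n$ common red neighbours in $G_n$. Next I would apply the asymmetric Ramsey theorem to $G(n,p)[U]$: since $|U|=\Omega(n)$ and $p$ is above the required threshold, $G(n,p)[U]\to(K_s,K_r)$, so the colouring admits either a blue $K_r$ (done) or a red $K_s$ on some set $T\subseteq U$.

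Assuming the latter, let $W$ denote the common red neighbourhood of $T$ in $G_n$; by DRC we have $|W|\geq\eta n$, and every $w\in W$ is joined to every $t\in T$ by a red edge of $G_n$. I would then apply the asymmetric Ramsey theorem a second time, to $G(n,p)[W]$, obtaining either a blue $K_r$ (done) or a red $K_{r-s}$ on some $B\subseteq W$. In the latter case $T\cup B$ spans a red $K_r$: edges inside $T$ and inside $B$ are red by construction, and every edge between $T$ and $B$ lies in $G_n$ and is red by the definition of $W$.

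The main obstacle will be the colouring-dependence of $U$ and $W$: the Kohayakawa--Kreuter type $1$-statement applies to the random graph on the full vertex set, but here the relevant subsets depend on the adversarial colouring and hence on $G(n,p)$. A \emph{robust} version of the asymmetric Ramsey theorem must therefore be established---roughly, that a.a.s.\ every induced subgraph of $G(n,p)$ on $\Omega(n)$ vertices enjoys the required Ramsey property. For $r$ odd ($s=(r+1)/2$, $r-s=s-1$) the second application needs only $(K_{s-1},K_r)$-Ramsey, whose threshold is strictly weaker than $n^{-1/m_2(K_s,K_r)}$, and this strict slack should be enough to carry out the robustness argument at the sharp threshold, yielding part~\ref{mainresult1odd}. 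For $r$ even, both Ramsey applications require the same threshold $n^{-1/m_2(K_{r/2},K_r)}$, and robustness forces a small $\varepsilon$-boost in the exponent, which is the source of the $(1-\varepsilon)$-factor in part~\ref{mainresult1even}.
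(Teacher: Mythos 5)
Your overall strategy is the same as the paper's: dependent random choice applied to the majority colour of $G_n$, two applications of the asymmetric Ramsey theorem (to a set $U$ and then to the common red neighbourhood $W$ of the red clique found in $U$), and a ``robust'' version of the Ramsey statement to cope with the fact that $U$ and $W$ depend on the colouring. However, there is a concrete error at the heart of your sketch: dependent random choice does \emph{not} give a linear-size $U$ in which every $s$-subset has $\eta n$ common neighbours. In the DRC inequality the loss term is $\binom{n}{s}(m/n)^{t}$, which for $m=\eta n$ and any constant $t$ is of order $n^{s}$ and swamps the main term $\gamma^{t}n$; one can only guarantee common neighbourhoods of size $n^{1-\varepsilon}$ (taking $t\approx s/\varepsilon$), which is exactly what the paper's Corollary~\ref{RDCCor} delivers. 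Consequently $W$ is only \emph{almost} linear, and the second Ramsey application must be carried out on sets of size $n^{1-\varepsilon}$, i.e.\ it needs $p\geq C(n^{1-\varepsilon})^{-1/m_2(K_{\floor{r/2}},K_r)}$. This, and not the robustness step, is the true source of the $(1-\varepsilon)$ in part~\ref{mainresult1even}; your diagnosis has it backwards, and under your (unjustified) assumption of linear $W$ you would obtain the sharp threshold for even $r$, which this method does not give. For odd $r$ your instinct is right that the slack $m_2(K_{(r-1)/2},K_r)<m_2(K_{(r+1)/2},K_r)$ saves the day, but it is used to absorb the sub-linearity of $W$, not to pay for robustness.

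The robustness step itself is also left as a black box, and it is not routine: the standard $1$-statement of Kohayakawa--Schacht--Sp\"ohel only gives error probability $o(1)$, which cannot survive a union bound over the up to $2^{n}$ colouring-dependent candidate sets $U$ (and, as noted above, you in fact need it for all sets of size $n^{1-\varepsilon}$, not merely $\Omega(n)$). The paper instead invokes an exponential form of the error bound, $\Pr(G(\tilde n,p)\nrightarrow(K_\ell,K_r))\leq 2^{-b\tilde n^{r}p^{\binom{r}{2}}}$ (Lemma~\ref{KSSL}), and checks that $n^{(1-\varepsilon)r}p^{\binom{r}{2}}=\Omega(n^{5/4})\gg n$ so that the union bound closes. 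Without identifying this ingredient the argument does not go through. Two smaller omissions: for $r=5$ the second application asks for $(K_2,K_5)$, which is mere containment of $K_5$ and falls outside the asymmetric Ramsey framework (the paper substitutes Janson's inequality there); and the exponential bound is only valid for $p$ below $n^{-1/x^*(K_r)}$, so one must first reduce to $p$ at the bottom of its range by monotonicity.
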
 
We complement Theorem~\ref{mainresult1} by the following lower bound for the threshold.
\begin{theorem} \label{mainresult0}
Let $r \geq 5$ be an integer and let $\gamma<\frac{1}{4}$ be a positive constant. Then there is a real constant $c>0$ and an $n$-vertex graph $G_n$ with ${|E(G_{n})| \geq \gamma n^{2}}$ such that for ${p \leq cn^{-1/m_{2}(K_{\ceil{\frac{r}{2}}},K_{r})}}$ we a.a.s.\ have
$$G_{n}\cup G(n,p)\nrightarrow(K_r). $$
\end{theorem}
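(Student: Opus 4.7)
The plan is to exhibit a specific $\gamma$-dense graph $G_n$ together with a 2-coloring of $E(G_n \cup G(n,p))$ that a.a.s.\ contains no monochromatic $K_r$. I would take $G_n$ to be the complete balanced bipartite graph $K_{\lfloor n/2\rfloor, \lceil n/2\rceil}$ with parts $A$ and $B$. Since $\gamma<1/4$, this is $\gamma$-dense for $n$ sufficiently large. The key feature of this choice is that $G_n$ contains no triangle, so in particular no $K_{\lceil r/2\rceil}$ inside either part.

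Next I would invoke the $0$-statement of the asymmetric Ramsey threshold for pairs of cliques, which asserts that for $p\leq cn^{-1/m_2(K_{\lceil r/2\rceil},K_r)}$ with $c>0$ sufficiently small, a.a.s.\ there exists a 2-coloring $\phi$ of $E(G(n,p))$ with no red $K_{\lceil r/2\rceil}$ and no blue $K_r$. This is the main external input; it is the 0-statement of the Kohayakawa--Kreuter conjecture, established for cliques by Marciniszyn, Skokan and Sp\"ohel, and is presumably recalled in Section~\ref{proofs} of this paper. Combining $G_n$ with $\phi$, I would then define a coloring $\psi$ of $E(G_n \cup G(n,p))$ by coloring every edge of $G_n$ red, and every edge of $G(n,p)$ according to $\phi$ (edges lying in both simply inherit red).

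It remains to verify that $\psi$ admits no monochromatic $K_r$. A blue $K_r$ under $\psi$ would have all its edges blue, hence all in $G(n,p)$, contradicting the defining property of $\phi$. For a red $K_r$ on a vertex set $S$ with $|S|=r$, at least one side satisfies $|S\cap A|\geq \lceil r/2\rceil$ (or symmetrically for $B$). The $\binom{\lceil r/2\rceil}{2}$ edges of this red $K_r$ lying within that part are absent from $G_n$, so they are red random edges, forming a red $K_{\lceil r/2\rceil}$ under $\phi$ — again contradicting the property of $\phi$. The assumption $r\geq 5$ is used precisely here, since it guarantees $\lceil r/2\rceil\geq 3$, so that the forbidden red clique is non-trivial (for $r=4$ the corresponding structure would be a mere red edge, which cannot be avoided).

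The conceptual argument is thus short, and the main obstacle is the $0$-statement of the asymmetric Ramsey threshold; once this is in hand, the splitting argument above is immediate. I do not anticipate any further technical difficulties, since the construction of $G_n$ is completely explicit and the verification is combinatorial.
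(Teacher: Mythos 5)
Your proposal is correct and follows essentially the same route as the paper: the same bipartite construction $K_{\lceil n/2\rceil,\lfloor n/2\rfloor}$, the same appeal to the Marciniszyn--Skokan--Sp\"ohel(--Steger) $0$-statement for $G(n,p)\nrightarrow(K_{\ceil{r/2}},K_r)$ (Theorem~\ref{MSSST}), and the same extension of the good colouring by painting the bipartite edges red. The verification that a red $K_r$ would force a red $K_{\ceil{r/2}}$ inside one part, hence inside the random graph, matches the paper's argument exactly.
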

Note that Theorem \ref{mainresult0} shows that Theorem \ref{mainresult1}\ref{mainresult1odd} is asymptotically optimal for odd $r\geq 5$ while for even $r\geq 6$ Theorem \ref{mainresult1}\ref{mainresult1even} leaves a `gap' of an arbitrarily small $\varepsilon$ in the exponent.
Independently of our work, Das and Treglown \cite{DasTreglown} proved a more general result which closes these gaps.
Finally, the following theorem covers the remaining case $r=4$.
\begin{theorem} \label{K4thm}
\begin{enumerate}[label=(\roman*)]
\item \label{1statementK4}
Let $\gamma >0$ and $\varepsilon>0$. Then for any graph $G_n$ on $n$ vertices with at least $\gamma n^2$ edges and any $p\geq n^{-\frac{1}{2}+\varepsilon}$ we a.a.s.\ have $G_n\cup G(n,p)\rightarrow(K_4).$ 

\item\label{0statementK4}
For $0<\gamma<\frac{1}{4}$ there exists a graph $G_n$ on $n$ vertices with at least $\gamma n^2$ edges such that for $p= o( n^{-\frac{1}{2}})$ we  a.a.s.\ have $G_n\cup G(n,p)\nrightarrow(K_4).$
\end{enumerate}
\end{theorem}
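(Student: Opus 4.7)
For part~\ref{1statementK4}, the plan is to combine dependent random choice (DRC) applied to the dense graph $G_n$ with Theorem~\ref{RRT} for triangles in $G(n,p)$. Given any 2-colouring of $E(G_n\cup G(n,p))$, one colour class, say red, accounts by averaging for at least $\gamma n^2/2$ edges of $G_n$. I apply DRC to this red subgraph to produce a set $U\subseteq V(G_n)$ with $|U|=\Omega(n)$ such that every triple of vertices in $U$ has at least $n^{1-\delta}$ common red-$G_n$-neighbours, for some small $\delta=\delta(\varepsilon)>0$. Since $p\geq n^{-1/2+\varepsilon}\gg|U|^{-1/m_2(K_3)}$, Theorem~\ref{RRT} applied to $G(n,p)|_U$ yields a monochromatic triangle in the inherited colouring. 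If it is red on $u_1,u_2,u_3\in U$, then any common red-$G_n$-neighbour $w$ guaranteed by DRC closes off a red $K_4$ on $\{w,u_1,u_2,u_3\}$.

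The harder case is a blue triangle on $u_1,u_2,u_3\in U$. I let $W$ be the polynomial-sized set of common red-$G_n$-neighbours of $u_1,u_2,u_3$ and reapply Theorem~\ref{RRT} to $G(n,p)|_W$ to obtain another monochromatic triangle. A red triangle on some $w_1,w_2,w_3\in W$ produces a red $K_4$ on $\{u_i,w_1,w_2,w_3\}$ for any~$i$, since each $u_iw_j$ is red in $G_n$ by construction of $W$ while the $w_jw_k$ are red in $G(n,p)$. The main technical obstacle is the sub-case in which the second triangle is also blue; the resulting 6-vertex configuration consists of two disjoint blue $K_3$'s joined by a complete red $K_{3,3}$ and contains no monochromatic $K_4$ on its own. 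Closing this gap appears to require either iterating the DRC to nested levels, or setting up a hierarchical DRC at the outset so that $W$ itself inherits enough internal common-neighbourhood structure for the same argument to recurse and terminate after a bounded number of steps.

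For part~\ref{0statementK4}, I take $G_n=K_{n/2,n/2}$ with parts $A,B$; this is $\gamma$-dense for any $\gamma<1/4$ and triangle-free. Since $p=o(n^{-1/2})$ lies below the threshold of Theorem~\ref{RRT} for $(K_3)_2$, a.a.s.\ there is a 2-colouring of $E(G(n,p))$ with no monochromatic triangle, which I select; I then colour all edges of $G_n$ red. Any monochromatic $K_4$ in $G_n\cup G(n,p)$ splits its four vertices across $A$ and $B$; a blue $K_4$ is impossible because every cross edge is red, and a red $K_4$ of 3-1 or 4-0 type would force a red triangle inside $G(n,p)|_A$ or $G(n,p)|_B$, contradicting $K_3$-freeness of red-$G(n,p)$. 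The remaining case is a 2-2 split, which requires a red edge of $G(n,p)$ on each side simultaneously; I expect that one can exploit the flexibility inherent in the 0-statement of Theorem~\ref{RRT} to refine the colouring so that all red edges of $G(n,p)$ lie on a single side of the bipartition, and verifying that such a refinement exists is the delicate step of this part.
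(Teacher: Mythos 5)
Both halves of your proposal contain genuine gaps, and in both cases the obstacle you flag as ``delicate'' is in fact fatal to the route you chose. For part~\ref{1statementK4}, the blue--blue case is not a technicality to be ironed out by iterating: two disjoint blue triangles joined by a red $K_{3,3}$ really contains no monochromatic $K_4$, and further levels of nesting only reproduce the same configuration, so seeking \emph{monochromatic triangles} in the random graph is the wrong decomposition. The paper instead decomposes $K_4$ as $K_2+K_2$: one only needs a red \emph{edge} inside $U$ and then a red edge inside the common red neighbourhood $W$ of its two endpoints, and a red edge is forced as soon as $G(n,p)[U]$ (resp.\ $G(n,p)[W]$) contains \emph{any} copy of $K_4$, since in the absence of a blue $K_4$ some edge of that copy must be red. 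This removes the blue case entirely. There is a second, quantitative gap in your argument: the sets $U$ and $W$ depend on the colouring, hence on the realisation of $G(n,p)$, so you must guarantee the relevant property for \emph{all} $2^n$ candidate subsets simultaneously; Theorem~\ref{RRT} only gives probability $1-o(1)$ and cannot be union-bounded. The paper uses Janson's inequality (Theorem~\ref{Janson}) to get failure probability $2^{-cn^{1+2\varepsilon}}$ for each $U$ with $|U|\geq n^{1-\varepsilon}$, which survives the factor $2^n$.

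For part~\ref{0statementK4}, the refinement you hope for does not exist. With all edges of $K_{n/2,n/2}$ coloured red, \emph{any} red edge inside $A$ together with \emph{any} red edge inside $B$ closes a red $K_4$ of $2$--$2$ type, since all four cross edges are present and red; so one side, say $B$, must receive no red random edges at all, i.e.\ $G(n,p)[B]$ must be entirely blue. But for $n^{-2/3}\ll p=o(n^{-1/2})$ the graph $G(n/2,p)$ a.a.s.\ contains copies of $K_4$ (Theorem~\ref{subgraphthreshold}), yielding a blue $K_4$. Hence no colouring that makes every edge of $G_n$ red can work in this range of $p$, and the flexibility in the $0$-statement of Theorem~\ref{RRT} cannot save you. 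The paper's construction necessarily colours the cross edges \emph{both} colours: it takes a vertex $2$-colouring of $G(n,p)$ with no monochromatic $K_4$ (the {\L}uczak--Ruci\'nski--Voigt vertex-Ramsey $0$-statement at $p=o(n^{-1/2})$ --- this is where the exponent $-\tfrac12$ really comes from), lets $U$ be one colour class, colours cross edges according to whether their endpoint lies in $U$, designates one ``special'' red edge at a $U$-vertex of every copy of $K_4$ in $G(n,p)[V_2]$, and rules out red copies of $K_4$ inside $V_2$ by a counting argument showing that overlapping copies of $K_4$ would force a subgraph on at most $8$ vertices with $|E|\geq 2|V|$, which $G(n,p)$ a.a.s.\ does not contain. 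You would need essentially all of these ingredients; the triangle-free edge-colouring alone cannot be patched.
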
 
Again, the result of Das and Treglown \cite{DasTreglown} improves the upper bound in Theorem~\ref{K4thm}\ref{1statementK4}, while Theorem~\ref{K4thm}\ref{0statementK4} improves their lower bound.
Thus, both results together establish $n^{-\frac{1}{2}}$ as a threshold for $G_n \cup G(n,p) \rightarrow (K_4)$.
\section{Preliminaries \& Notation}

In this short section we introduce further notations and a few basic results that we will use repeatedly.
For a graph $G=(V,E)$ and a subset ${U \subseteq V}$, by $G[U]$ we denote the subgraph of $G$ induced by the vertex set $U$. Furthermore, we will sometimes write $E(U)$ instead of $E(G[U])$. 

By $\rho(H)$ we denote the \textit{density} of a graph $H$ which is defined as $$\rho(H)=\frac{|E(H)|}{|V(H)|}.$$
The following well-known result establishes the threshold for containing a fixed subgraph.
\begin{theorem} [\cite{erdos1960evolution}] \label{subgraphthreshold}
Let $r\geq 3$ be an integer. 
We have 
\begin{align*}
 \lim_{n \to \infty} \Pr (G(n,p) \text{ contains a copy of $K_r$ as a subgraph})\\=
\begin{cases}
0 & \text{if } p=o(n^{-\frac{1}{\rho(K_r)}})\\
1 & \text{if } p=\omega(n^{-\frac{1}{\rho(K_r)}})
\end{cases}
\end{align*}
\end{theorem}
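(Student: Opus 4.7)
The plan is the classical first and second moment calculation of Erd\H{o}s and R\'enyi. Let $X = X_n$ denote the number of copies of $K_r$ in $G(n,p)$, so that $X = \sum_S \mathbf{1}[S \text{ spans } K_r]$ summed over the $r$-subsets $S$ of the vertex set, and
\[
\mathbb{E}[X] = \binom{n}{r}\, p^{\binom{r}{2}} = \Theta\!\left((n\, p^{\rho(K_r)})^r\right).
\]
Thus $\mathbb{E}[X] \to 0$ when $p = o(n^{-1/\rho(K_r)})$ and $\mathbb{E}[X] \to \infty$ when $p = \omega(n^{-1/\rho(K_r)})$. The $0$-statement is then immediate from Markov's inequality: $\Pr(X \geq 1) \leq \mathbb{E}[X] = o(1)$.

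For the $1$-statement I would use Chebyshev's inequality via the second moment method. Grouping ordered pairs $(S,T)$ of $r$-subsets by the intersection size $j = |S \cap T|$ and noting that $\Pr(S \text{ and } T \text{ both span } K_r) = p^{2\binom{r}{2} - \binom{j}{2}}$, the $j=0$ contribution to $\mathbb{E}[X^2]$ equals $(1+o(1))\mathbb{E}[X]^2$. It remains to show that the total contribution of pairs with $|S \cap T| = j$ for $1 \leq j \leq r-1$ is $o(\mathbb{E}[X]^2)$. After simplification this reduces, for each such $j \geq 2$, to the requirement $n^j p^{\binom{j}{2}} \to \infty$, equivalently $p \gg n^{-2/(j-1)}$ (the $j = 1$ case is automatic). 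The most restrictive of these is $j = r-1$, giving $p \gg n^{-2/(r-2)}$, which is comfortably implied by the hypothesis $p = \omega(n^{-2/(r-1)}) = \omega(n^{-1/\rho(K_r)})$. Chebyshev's inequality then yields $\Pr(X = 0) \leq \operatorname{Var}(X)/\mathbb{E}[X]^2 \to 0$.

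The only nontrivial step is the variance computation, and it succeeds precisely because $K_r$ is strictly balanced: $\rho(J) < \rho(K_r)$ for every proper subgraph $J \subsetneq K_r$. This strict density monotonicity is exactly what forces each intersection-size $j \geq 1$ contribution to the second moment to be subleading under the stated hypothesis, and is the reason no $m_2$-type correction appears in this clean special case.
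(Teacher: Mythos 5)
Your proposal is correct and is exactly the classical Erd\H{o}s--R\'enyi first/second moment argument that the paper invokes by citation (it gives no proof of its own for this theorem). The only cosmetic omission is the diagonal term $S=T$ in $\mathbb{E}[X^2]$, but its contribution is $\mathbb{E}[X]=o(\mathbb{E}[X]^2)$ under your hypothesis, so nothing is missing in substance, and your closing remark correctly identifies strict balancedness of $K_r$ as the reason every intersection size is subleading.
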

In order to get exponential bounds on the probability of non-existence of a graph we will use the following variant of Janson's inequality (see \cite{janson1990poisson}, \cite{janson1988exponential}).
\begin{theorem}[\cite{janson1990poisson}, \cite{janson1988exponential}]\label{Janson}
For any $r\geq 3$ there exists some constant $c_r$ such that for any $p<1$ and all $n \in \N$ the probability that $G(n,p)$ is $K_r$-free is at most $2^{-c_r n^{r}p^{r \choose 2}}.$
\end{theorem}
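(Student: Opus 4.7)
The plan is to apply Janson's inequality to the family of potential $K_r$-copies in $G(n,p)$. For each $r$-subset $S \subseteq [n]$ let $B_S$ be the event that all $\binom{r}{2}$ edges of $S$ lie in $G(n,p)$, so that $G(n,p)$ is $K_r$-free precisely when $\bigcap_S \overline{B_S}$ holds. I set
\[
\mu = \sum_S \Pr[B_S] = \binom{n}{r}p^{\binom{r}{2}} = \Theta_r\!\bigl(n^r p^{\binom{r}{2}}\bigr),\qquad
\Delta = \sum_{\substack{S \neq T\\ |S\cap T|\ge 2}} \Pr[B_S\cap B_T].
\]
Splitting $\Delta$ by $k := |S\cap T|\in\{2,\dots,r-1\}$ (pairs with $|S\cap T|\le 1$ are independent and contribute zero) yields
\[
\Delta = \sum_{k=2}^{r-1}\binom{n}{r}\binom{r}{k}\binom{n-r}{r-k}\,p^{2\binom{r}{2}-\binom{k}{2}} = \sum_{k=2}^{r-1}\Theta_r\!\bigl(n^{2r-k}\,p^{2\binom{r}{2}-\binom{k}{2}}\bigr).
\]

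Janson's inequality (in the form of the cited references) then gives
\[
\Pr\Bigl[\bigcap_S \overline{B_S}\Bigr] \le \exp\!\Bigl(-\tfrac{\mu^{2}}{2(\mu+\Delta)}\Bigr).
\]
To match the target $2^{-c_r n^r p^{\binom{r}{2}}} = \exp\bigl(-c_r\log 2\cdot n^r p^{\binom{r}{2}}\bigr)$, it suffices to show $\mu^{2}/(\mu+\Delta) \ge c'_r\,\mu$ for some $c'_r = c'_r(r) > 0$, equivalently $\Delta = O_r(\mu)$. Comparing term by term, the $k$-contribution to $\Delta$ equals $\Theta_r\!\bigl(\mu\cdot n^{r-k}p^{\binom{r}{2}-\binom{k}{2}}\bigr)$, so the task reduces to bounding $n^{r-k}p^{\binom{r}{2}-\binom{k}{2}}$ by an $r$-dependent constant for every $k\in\{2,\dots,r-1\}$.

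The main obstacle is this last, routine but somewhat finicky, calculation. The tightest condition comes from $k=r-1$, where the requirement $\Delta_{r-1} = O_r(\mu)$ amounts to $np^{r-1}=O(1)$; this is implied as long as $p$ does not exceed the order of the $K_r$-subgraph threshold $n^{-2/(r-1)}$ from Theorem~\ref{subgraphthreshold}, and in that regime the conclusion is immediate. In the complementary range of $p$ one either observes that $\mu$ is so small that the target bound $2^{-c_r n^r p^{\binom{r}{2}}}$ carries essentially no information, or invokes the extended form $\Pr[\bigcap\overline{B_S}]\le \exp(-\mu^2/(2\Delta))$ together with a sufficiently small choice of $c_r=c_r(r)>0$, yielding an estimate that is uniform in $p<1$ and~$n$.
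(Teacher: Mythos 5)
The paper gives no proof of this statement --- it is quoted as a black-box result from the cited references --- so I can only assess your argument on its own terms. Your first case is the standard and correct application of Janson's inequality: for each $k$ the ratio of the $k$-term of $\Delta$ to $\mu$ is $\Theta_r\bigl(n^{r-k}p^{\binom{r}{2}-\binom{k}{2}}\bigr)$, and all of these are $O_r(1)$ once $p=O\bigl(n^{-2/(r+1)}\bigr)$ (a small slip: the binding constraint is $k=2$, which forces $p\lesssim n^{-2/(r+1)}$, not $k=r-1$; your sufficient condition $p\le n^{-2/(r-1)}$ still lies inside this range, so the case goes through).

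The second case, however, is broken, and not just hand-wavy. For large $p$ the expected count $\mu=\Theta_r\bigl(n^rp^{\binom{r}{2}}\bigr)$ is \emph{large}, not small, so the target bound $2^{-c_rn^rp^{\binom{r}{2}}}$ is a very strong claim there --- the opposite of ``carries essentially no information.'' The extended form $\exp\bigl(-\mu^2/(2\Delta)\bigr)$ does not rescue you either: for constant $p$ and $r=3$, say, $\mu=\Theta(n^3)$ while the $k=2$ term makes $\Delta=\Theta(n^4)$, so $\mu^2/\Delta=\Theta(n^2)$, and no constant $c_3>0$ turns $\exp(-\Theta(n^2))$ into $2^{-c_3n^3/8}$. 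Indeed the statement as printed is simply false in that regime: the probability that $G(n,\tfrac12)$ is triangle-free is $2^{-\Theta(n^2)}$ (there are $2^{(1+o(1))n^2/4}$ triangle-free graphs), which is far larger than $2^{-cn^3}$. The correct uniform statement from the cited references has exponent $c\cdot\min_{H\subseteq K_r,\,e(H)\ge1}n^{v(H)}p^{e(H)}$, which equals $\Theta\bigl(n^rp^{\binom{r}{2}}\bigr)$ exactly in the range your first case covers; every application of the lemma in this paper lies in that range, so the right fix is to add the hypothesis $p=O\bigl(n^{-2/(r+1)}\bigr)$ (or state the exponent as the minimum over subgraphs) rather than to attempt a proof for all $p<1$.
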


\section{Proofs} \label{proofs}
\subsection{Proof of Theorem \ref{mainresult0}}
We start off by showing the optimality of our result for $r\geq 5$ as this proof is easy and still already illustrates the relation between Ramsey properties of the considered model and (asymmetric) Ramsey properties of pure random graphs.
The following $0$-statement for an asymmetric Ramsey property is crucial. Note that the result of Marciniszyn, Skokan, Sp{\"o}hel and Steger \cite{marciniszyn2009asymmetric} is more general as it addresses an arbitrary number of colours and cliques.
\begin{theorem}[\cite{marciniszyn2009asymmetric}]\label{MSSST}
Let $3 \leq \ell \leq r$ be integers. Then there exists a real constant $ c>0$ such that for $p \leq cn^{-1/m_{2}(K_{\ell}, K_{r})}$ we a.a.s.\ have
${G(n,p) \rightarrow (K_{\ell},K_r)}.$
\end{theorem}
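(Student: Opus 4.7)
The plan is to prove the stated asymmetric Ramsey $0$-statement, i.e.\ that a.a.s.\ $G(n,p)$ admits a red/blue edge-colouring with no red copy of $K_\ell$ and no blue copy of $K_r$ whenever $p\le cn^{-1/m_2(K_\ell,K_r)}$, which is what is actually invoked in the subsequent proof of Theorem~\ref{mainresult0}.  My plan follows the Marciniszyn--Skokan--Sp\"ohel--Steger framework of~\cite{marciniszyn2009asymmetric}.

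The exponent $1/m_2(K_\ell,K_r)$ is dictated by the following picture: let $J\subseteq K_r$ attain $m_2(K_\ell,K_r)=d_2(K_\ell,J)$.  At the chosen~$p$, a typical edge of $G(n,p)$ lies in $\Theta(1)$ copies of~$J$, and $J$ turns out to be the ``attaching subgraph'' that governs how copies of $K_r$ can be grown on top of a copy of $K_\ell$.  I would first identify this extremal $J$ and prove the structural inequality $d_2(K_\ell,J')<d_2(K_\ell,J)$ for every other relevant $J'\subseteq K_r$, the Tur\'an-type input driving the probabilistic estimates.

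Next I would describe an explicit deterministic colouring algorithm.  Tentatively colour every edge of $G(n,p)$ blue; then process the copies of $K_\ell$ one by one, flipping to red exactly one edge of each copy, chosen so that it does not currently lie in any blue copy of $K_r$.  Call an edge \emph{blocked} if every blue $K_r$ through it has already been completed, and call a copy of $K_\ell$ \emph{grounded} if all $\binom{\ell}{2}$ of its edges are blocked.  Deterministically, the algorithm succeeds unless $G(n,p)$ contains a grounded configuration: a subgraph $F$ consisting of a copy of $K_\ell$ together with, for each of its edges, an attached copy of $K_r$ witnessing that the edge is blocked.

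The main obstacle is the probabilistic step: show that a.a.s.\ no grounded configuration is present.  One classifies the finitely many isomorphism types of minimal grounded configurations $F$ and, using the extremality of~$J$, verifies that each satisfies $d_2(F)>m_2(K_\ell,K_r)$.  The expected number of copies of $F$ in $G(n,p)$ is then of order $n^{|V(F)|}p^{|E(F)|}=o(1)$ for $c$ sufficiently small, and a union bound over the finitely many types completes the argument.  For the detailed case analysis I would cite~\cite{marciniszyn2009asymmetric} and use the resulting colouring as a black box in the subsequent proof of Theorem~\ref{mainresult0}.
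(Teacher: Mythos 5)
First, a point of calibration: the paper does not prove this statement at all --- it is imported as a black box from Marciniszyn, Skokan, Sp\"ohel and Steger, so there is no internal proof to compare yours against. (Note also that the theorem as printed contains a typo: as the surrounding text (``the following $0$-statement\dots'') and its use in the proof of Theorem~\ref{mainresult0} make clear, the conclusion should read $G(n,p)\nrightarrow(K_{\ell},K_r)$. You correctly interpreted it as the $0$-statement.) Since you too ultimately defer the hard part to \cite{marciniszyn2009asymmetric}, your proposal is admissible in the same way the paper's citation is; but the sketch you give of the underlying argument would not assemble into a proof.

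Two concrete problems. (1) Your colouring algorithm has the roles of the two cliques reversed. Starting from an all-blue colouring there are no red edges and hence no red $K_{\ell}$; the objects that must be destroyed are the all-blue copies of $K_r$. So one processes copies of $K_r$, recolouring one edge of each to red, where an edge is \emph{unsafe} if recolouring it would \emph{complete a red copy of $K_{\ell}$} (it is the unique non-red edge of some $K_{\ell}$-copy), and the algorithm fails only on a $K_r$ all of whose edges are unsafe. Your version --- flipping one edge of each $K_{\ell}$, ``chosen so that it does not currently lie in any blue copy of $K_r$'' --- neither reliably destroys blue copies of $K_r$ nor guards against red copies of $K_{\ell}$; the safety condition is attached to the wrong colour/clique. (2) The probabilistic step is not a union bound over finitely many bounded obstruction types. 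Each red edge of an attached $K_{\ell}$ must itself be justified by an earlier copy of $K_r$, so the witness structures grow recursively and have unbounded size; moreover the relevant density condition for the first-moment bound at $p=cn^{-1/m_2(K_{\ell},K_r)}$ is $|E(F)|/|V(F)|>m_2(K_{\ell},K_r)$ (or an edge-by-edge ``expected loss'' in a grow sequence), not $d_2(F)>m_2(K_{\ell},K_r)$. Controlling these unbounded structures via a deletion/core argument, using that $K_r$ is strictly balanced with respect to $d_2(K_{\ell},\cdot)$, is precisely the content of the cited proof, and it is the part your sketch waves away.
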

\begin{proof}[Proof of Theorem \ref{mainresult0}]
Let $G_n$ be the complete bipartite graph $K_{\ceil{\frac{n}{2}}, \floor{\frac{n}{2}}}$. Then, for sufficiently large $n \in \N$  we have
$$|E(G_n)|\geq \ceil{n/2}\floor{n/2}\geq \frac{(n+1)(n-1)}{4}\geq \gamma n^2$$ as required. We obtain $c=c(\ceil{\frac{r}{2}},r)$ from Theorem \ref{MSSST}.

We get $$\lim\limits_{n \rightarrow \infty} \Pr(G_{n}\cup G(n,p)\shortrightarrow(K_r)) \leq \lim\limits_{n \rightarrow \infty}\Pr(G(n,p) \rightarrow (K_{\ceil{\frac{r}{2}}},K_r))=0,$$
where the equality follows from Theorem \ref{MSSST} and the inequality follows from the following deterministic argument.

Suppose that $H_n \nrightarrow (K_{\ceil{\frac{r}{2}}},K_r)$ for an $n$-vertex graph $H_n$ and let $\phi$ be a red-blue-colouring of $E(H_n)$ that does not admit a red copy of $K_{\ceil{\frac{r}{2}}}$ or a blue copy of $K_r$. We extend $\phi$ to a colouring $\Phi$ of $E(G_n \cup H_n)$ by assigning the colour red to all remaining edges. The resulting colouring $\Phi$ does not admit a blue copy of $K_r$, since $\phi$ does not. Futhermore, it does not yield a red copy of $K_r$, because any such copy would need to have at least $ \ceil{\frac{r}{2}}$ vertices in one of the partition classes of the bipartite graph $G_n$, contradicting the choice of $\phi$. 
\end{proof}
\subsection{Proof of Theorem \ref{mainresult1}}

The proof of our result for cliques of even size $r\geq 6$ works as follows: 
We first show that in order to satisfy $G_n \cup G(n,p) \rightarrow (K_r)$ it suffices to have $G(n,p)[U] \rightarrow (K_{\frac{r}{2}},K_r)$ for all 'almost linear' subsets $U\subseteq V$ (those of size at least $n^{1-\varepsilon}$). In the second step we bound the probability of the above event by means of an asymmetric Ramsey result that was established recently. 

For odd $r \geq 5$ we can improve the technique by seeking for 
$G(n,p)[U] \rightarrow (K_{\frac{r+1}{2}},K_r)$ in 'linear subsets' and $G(n,p)[U] \rightarrow (K_{\frac{r-1}{2}},K_r)$ in the 'almost linear subsets'. Since the latter property generally requires a smaller $p$ to be ensured, the first will be the limiting factor, although we ask for it in slightly bigger subsets. By means of this little trick, the $\varepsilon$ reducing the size of our subsets in the second condition will no longer play a role and we get the asymptotically tight 1-statement.
A key element for the first step (of both proofs) is the following lemma given by Fox and Sudakov in their article on the method of dependent random choice \cite{fox2011dependent}:
\begin{lemma}[\cite{fox2011dependent}]\label{DRCL}
Let $a$, $d$, $m$, $n$ and $r$ be positive integers. Let $G=(V,E)$ be a graph with $|V|=n$ and average degree $d=d(G)=\frac{2 |E|}{|V|}$. If there is a positive integer t such that
\begin{align}\label{DRCI}
\frac{d^{t}}{n^{t-1}}-{n \choose r} \left(\frac{m}{n}\right)^{t}\geq a,
\end{align}
then $G$ contains a subset $U\subseteq V$ of at least $a$ vertices such that every $r$ vertices in $U$ have at least $m$ common neighbours.
\end{lemma}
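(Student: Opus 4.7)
The plan is to use the dependent random choice technique itself, which is precisely the setting in which this lemma was originally proved. First I would sample a multiset $T=(v_1,\dots,v_t)$ of $t$ vertices uniformly and independently with replacement from $V$, and set $A := \bigcap_{i=1}^{t} N(v_i)$, the common neighbourhood of $T$ in $G$. The size of $A$ will play the role of the candidate vertex set, which I will later clean up to obtain $U$.

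Next I would estimate $\mathbb{E}[|A|]$. By linearity of expectation and independence,
$$\mathbb{E}[|A|] = \sum_{u\in V} \Pr(u\in A) = \sum_{u\in V}\left(\frac{\deg(u)}{n}\right)^{t} = \frac{1}{n^t}\sum_{u\in V}\deg(u)^t.$$
Applying Jensen's inequality (convexity of $x\mapsto x^t$) to the right-hand side together with $\sum_u \deg(u)=dn$ yields $\mathbb{E}[|A|]\geq n\cdot(d/n)^t = d^t/n^{t-1}$. This recovers the first term on the left-hand side of the hypothesis~\eqref{DRCI}.

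Then I would bound the number of \emph{bad} $r$-subsets inside $A$, where I call an $r$-subset $R\subseteq V$ bad if $|N(R)|<m$ (fewer than $m$ common neighbours in $G$). Let $X$ denote the number of bad $r$-subsets contained in $A$. For any fixed bad $R$, independence of the samples gives $\Pr(R\subseteq A)=(|N(R)|/n)^t<(m/n)^t$, so by linearity $\mathbb{E}[X]<\binom{n}{r}(m/n)^t$. Combining the two estimates,
$$\mathbb{E}[|A|-X] \;>\; \frac{d^t}{n^{t-1}} - \binom{n}{r}\left(\frac{m}{n}\right)^{t} \;\geq\; a.$$
Hence there exists a realisation of $T$ for which $|A|-X\geq a$. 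Finally I would delete one vertex from each bad $r$-subset contained in $A$ for that specific $T$; the resulting set $U\subseteq A$ has at least $a$ vertices and by construction no bad $r$-subset, which means every $r$ vertices of $U$ have at least $m$ common neighbours.

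This is quite routine and I do not expect a serious obstacle; the only place that requires care is the lower bound $\mathbb{E}[|A|]\geq d^t/n^{t-1}$, where one must remember to apply Jensen's inequality (valid since $t\geq 1$) rather than trying to argue edge-by-edge. Everything else is linearity of expectation plus the standard "delete a vertex from each bad configuration" cleanup.
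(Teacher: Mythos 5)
Your proof is correct and is precisely the standard dependent random choice argument from the cited source (Fox--Sudakov); the paper itself states this lemma without proof, so there is nothing to compare against beyond that reference. All steps — the first-moment lower bound on $|A|$ via Jensen, the expected count of bad $r$-sets, and the deletion cleanup — are carried out correctly.
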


\begin{corollary} \label{RDCCor}
For any $\gamma$, $\varepsilon>0$ and $r \in \N$ there is a constant $\alpha>0$ and $n_{0} \in \N$ such that the following holds for all integers $n \geq n_{0}$. For every $\gamma$-dense $n$-vertex graph $G=(V,E)$ there is a subset $U \subseteq V$ with $|U| \geq \alpha n$ such that every $r$ vertices in $U$ have at least $n^{1-\varepsilon}$ common neighbours.
\end{corollary}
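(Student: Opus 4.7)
The plan is to apply Lemma \ref{DRCL} directly with $m = n^{1-\varepsilon}$, $a = \alpha n$ (for an $\alpha>0$ chosen below), and a suitably large but constant $t = t(r,\varepsilon)$. Since $G$ is $\gamma$-dense we have
$$d = d(G) = \frac{2|E|}{|V|} \geq 2\gamma n,$$
which gives the lower bound $\frac{d^t}{n^{t-1}} \geq (2\gamma)^t n$ on the first term of \eqref{DRCI}.

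The second term satisfies $\binom{n}{r}(m/n)^t \leq n^r \cdot n^{-\varepsilon t} = n^{r - \varepsilon t}$. To make this negligible compared to the linear first term, I would fix any integer $t$ with $t > r/\varepsilon$, for instance $t := \lceil (r+1)/\varepsilon \rceil$, so that $r - \varepsilon t \leq -1$ and hence $\binom{n}{r}(m/n)^t \to 0$ as $n \to \infty$. Then setting $\alpha := (2\gamma)^t / 2$ (a positive constant depending only on $\gamma, r, \varepsilon$), we obtain
$$\frac{d^t}{n^{t-1}} - \binom{n}{r}\left(\frac{m}{n}\right)^t \geq (2\gamma)^t n - n^{r - \varepsilon t} \geq \alpha n$$
for all $n \geq n_0$, provided $n_0 = n_0(\gamma, r, \varepsilon)$ is chosen large enough so that the vanishing second term is at most $(2\gamma)^t n / 2$.

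With these parameters inequality \eqref{DRCI} is satisfied, and Lemma \ref{DRCL} yields a subset $U \subseteq V$ with $|U| \geq a = \alpha n$ in which every $r$ vertices have at least $m = n^{1-\varepsilon}$ common neighbours, as required.

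There is no real obstacle here; the whole content is a careful choice of $t$ large enough to absorb the $n^r$ factor from $\binom{n}{r}$ while keeping $t$ constant so that the first term remains linear in $n$. The only point that deserves attention is verifying that $\alpha$ and $n_0$ can be chosen depending only on $\gamma$, $r$, and $\varepsilon$, which is immediate from the bounds above.
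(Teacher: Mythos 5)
Your proposal is correct and follows essentially the same route as the paper: both apply Lemma \ref{DRCL} directly with $m=n^{1-\varepsilon}$, $a=\alpha n$, and a constant $t$ of order $r/\varepsilon$ chosen so that the $\binom{n}{r}(m/n)^t$ term is $O(1)$ while the first term stays linear in $n$. The only cosmetic differences are that you round $t$ up to an integer and use the bound $d\geq 2\gamma n$ where the paper uses the weaker $d\geq \gamma n$; both yield a valid constant $\alpha$ depending only on $\gamma$, $\varepsilon$, $r$.
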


\begin{proof}
The following is a straightforward application of the Lemma we just introduced. As usual, for the sake of a less baroque presentation, we do not round our parameters and instead assume they are integers.
For $t={\frac{r}{\varepsilon}}$ and $\alpha = \frac{\gamma^{t}}{2}$ let $n_{0} \in \N$ be sufficiently large such that $n_{0} \geq \frac{2}{\gamma^{t}}$.
For any integer $n\geq n_0$ and a given graph $G$ satisfying the above properties we then get
\begin{align*}
 \frac{d(G)^{t}}{n^{t-1}}-{n \choose r} \left(\frac{n^{1-\varepsilon}}{n}\right)^{t} \geq
\frac{(\gamma n)^{t}}{n^{t-1}}-{n \choose r} \left(\frac{n^{1-\varepsilon}}{n}\right)^{t} 
 \geq \gamma^{t} n - n^{r-\varepsilon t} \\ = \gamma^{t}n-1 \geq (\frac{\gamma^{t}}{2}+\frac{\gamma^{t}}{2})n-1 \geq (\alpha+\frac{1}{n_0})n-1 \geq \alpha n
\end{align*}
which verifies (\ref{DRCI}) for $m=n^{1-\varepsilon}$ and $a=\alpha n$. Hence, the corollary follows from Lemma~\ref{DRCL}.
\end{proof}
The following lemma completes step 1.

\begin{lemma}\label{mylemma}
Let $\gamma$, $\varepsilon>0$ be real constants and let $r \in \N$. Then there exist a real constant $\alpha>0$ and  $n_{0} \in \N$ such that the following holds for all integers  $n \geq n_{0}$ and all $\gamma$-dense $n$-vertex graphs $G=(V,E)$. 
\begin{equation} 
\begin{split} \label{myLemmaImpl}
 & \text{If we have }G[U] \rightarrow(K_{\ceil{\frac{r}{2}}}, K_{r}) \text{ for all subsets }U \subseteq V\text{ with }|U|\geq \alpha n \text{} \text{ and } \\
 &   G[U] \rightarrow(K_{\floor{\frac{r}{2}}}, K_{r}) \text{ for all subsets }U \subseteq V \text{ with }|U|\geq n^{1-\varepsilon},\text{ }  \text{then }G \rightarrow(K_{r}).
 \end{split}
\end{equation}
\end{lemma}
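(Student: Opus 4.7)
The plan is to use Corollary \ref{RDCCor} to bridge between the two hypotheses. Given a red-blue colouring of $E(G)$, I first observe that since $G$ is $\gamma$-dense, one of its colour classes, say the red subgraph $G_R$, contains at least $\tfrac{\gamma}{2} n^2$ edges and is therefore $\tfrac{\gamma}{2}$-dense. The key step is to apply Corollary \ref{RDCCor} to $G_R$, with the corollary's ``$r$'' set to $\lceil r/2 \rceil$ and with the given $\varepsilon$, producing a subset $U \subseteq V$ of size at least $\alpha n$ in which every $\lceil r/2 \rceil$ vertices share at least $n^{1-\varepsilon}$ common red neighbours in $G$. The constant $\alpha$ in the lemma is then taken to be the one supplied by the corollary; by the symmetry of what follows, the same constant works regardless of which colour turns out to be dense.

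Having $U$ in hand, I apply the first hypothesis to $G[U]$: either the colouring yields a blue $K_r$ in $U$ (and we are done), or it yields a red clique $R$ of order $\lceil r/2 \rceil$ inside $U$. In the latter case, the choice of $U$ guarantees a set $W$ of common red neighbours of $R$ with $|W| \geq n^{1-\varepsilon}$. Applying the second hypothesis to $G[W]$ then produces either a blue $K_r$ (again we are done) or a red $K_{\lfloor r/2 \rfloor}$ on some vertex set $R' \subseteq W$. Since every vertex of $R'$ is joined to every vertex of $R$ by a red edge, the union $R \cup R'$ is a red clique on $\lceil r/2 \rceil + \lfloor r/2 \rfloor = r$ vertices, as required.

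The case in which blue rather than red is the dense colour class is handled by swapping the colour labels; this is legitimate because $G[U] \rightarrow (K_{\lceil r/2 \rceil}, K_r)$ is symmetric in the two colours as a Ramsey statement. I do not anticipate any serious difficulty beyond bookkeeping for the constants: dependent random choice has already been encapsulated in Corollary \ref{RDCCor}, and the two-tier structure of finding first a red $K_{\lceil r/2 \rceil}$ inside a linearly large set and then a red $K_{\lfloor r/2 \rfloor}$ inside its common red neighbourhood is dictated directly by the shape of the two hypotheses in \eqref{myLemmaImpl}.
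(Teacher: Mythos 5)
Your proof is correct and follows essentially the same route as the paper: apply dependent random choice (Corollary \ref{RDCCor}) to the denser colour class, use the first hypothesis on the resulting linearly large set to extract a monochromatic $K_{\ceil{r/2}}$, and use the second hypothesis on its common neighbourhood (of size at least $n^{1-\varepsilon}$) to complete the monochromatic $K_r$. The only cosmetic differences are that the paper invokes the corollary with parameter $r$ rather than $\ceil{r/2}$ (immaterial, since common neighbourhoods only grow when the set shrinks), and your colour-swap is exactly the paper's ``without loss of generality'' --- though note it is justified because the hypotheses hold for \emph{every} colouring and the conclusion is colour-symmetric, not because the asymmetric arrow $G[U]\rightarrow(K_{\ceil{r/2}},K_r)$ is itself symmetric in the two colours.
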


\begin{proof}
Let $\alpha=\alpha(\gamma,\varepsilon,r)$ and $n_0=n_0(\gamma,\varepsilon,r)$ be given by Corollary \ref{RDCCor}.
Let $G=(V,E)$ be a graph  satisfying $|V|=n\geq n_0$ and the other assumptions from above. Suppose for contradiction that  $\phi$ is a red-blue-colouring of $E(G)$ without a monochromatic copy of $K_{r}$. Without loss of generality we may assume $|\phi^{-1}(\{\text{red}\})|\geq \frac{\gamma}{2} n^{2}$ and let $G_{\text{red}}$ be the subgraph on $V$ that contains only the red edges. By Corollary~\ref{RDCCor} there exists $U \subseteq V$ with $|U| \geq \alpha n$ such that every $r$ ver\-ti\-ces from $U$ have at least $n^{1-\varepsilon}$ common neighbours in $G_{\text{red}}$. It follows from our assumption that we have $G[U]\rightarrow(K_{\ceil{\frac{r}{2}}}, K_{r}).$ Since $G[U]$ does not contain a blue copy of $K_{r}$, there must be a copy of $K_{\ceil{\frac{r}{2}}}$ in $G_{\text{red}}[U]$. Let $X$ be the set of vertices inducing this copy. Owing to $|X|=\ceil{\frac{r}{2}}\leq r$ and the choice of $U \supseteq X$, the common neighbourhood $W$ of $X$ in $G_{\text{red}}$ has size at least $n^{1-\varepsilon}$. Now the second assumption implies $G[W]\rightarrow(K_{\floor{\frac{r}{2}}}, K_{r})$ and by the argument given above we find a copy of $K_{\floor{\frac{r}{2}}}$ in $G_{\text{red}}[W]$. Let $Y$ be the set of vertices forming this copy; then $X \cup Y$ induces a copy of $K_{r}$ in $G_{\text{red}}$, yielding a contradiction.
\end{proof}
As indicated above, in step 2 we want to bound the probability of the event forming the hypothesis in \eqref{myLemmaImpl} using asymmetric Ramsey results.

Not too long ago, Kohayakawa, Schacht and Sp{\"o}hel \cite{kohayakawa2014upper} proved an asymmetric 1-statement for two well-behaved graphs $G$ and $H$. In particular, for cliques their upper bound asymptotically coincides with the lower bound we met in Theorem \ref{MSSST}. However, for our purposes  their main result does not help much because we need the specific exponential form of the error probability. This is because for our arguments the error probability still needs to converge to 0 when multiplied with the number of subsets of the vertex set, i.e. $2^n$.

Instead we want to apply Lemma 23 from \cite{kohayakawa2014upper} which (in the original form) contains some paper-specific notation that we do not need. Hence, we state a version that is well adapted to our setting and provide a short deduction in the following lines. However, we do not go into any details of \cite{kohayakawa2014upper}:

For our purposes we take $G=K_l$ and $H=K_r$. It is easy to check that~$K_r$ is strictly balanced with respect to $d_2(K_l,\cdot)$ (see p.3 from that paper for the definition). Then Lemma 13 from that article assures that $K_l$ and $K_r$ satisfy the hypothesis of Lemma 23.

In the lemma you find a graph parameter $x^*(H)>0$ (where, of course,~$H$ is a graph). Since we do not intend to use its explicit form anywhere, we refer to \cite{kohayakawa2014upper} (Definiton 11) in case the reader wishes to have a look at the definiton.  

\begin{lemma}[\cite{kohayakawa2014upper}]\label{KSSL}
Let $3 \leq \ell < r$ be integers. Then there exist real constants $C>0$, $ b>0$ and $n_{0} \in \N$ such that for any integer $n \geq n_{0}$ and any $p$ satisfying $$n^{-1/x^*(K_r)}\geq p \geq Cn^{-1/m_{2}(K_{\ell}, K_{r})}$$ we have
\begin{equation*}
\Pr(G(n,p)\rightarrow(K_\ell,K_r)) \geq 1-2^{-bn^{r}p^{r \choose 2}}.
\end{equation*}
\end{lemma}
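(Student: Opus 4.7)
The plan is to deduce this lemma as a direct corollary of Lemma 23 from \cite{kohayakawa2014upper}, instantiated with the pair $(G, H) = (K_\ell, K_r)$. That lemma gives precisely an exponentially small upper bound of the form $2^{-b\, n^{v(H)} p^{e(H)}}$ on $\Pr(G(n,p) \not\to (G, H))$, uniformly for $p$ in a range whose lower endpoint is of order $Cn^{-1/m_2(G,H)}$ and whose upper endpoint is the technical cutoff involving the parameter $x^*(H)$ from that paper. With $v(K_r)=r$ and $e(K_r)=\binom{r}{2}$, this specialises exactly to our target statement, so the whole job reduces to verifying the structural hypothesis of Lemma 23 for the chosen pair.

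For this verification, I would invoke Lemma 13 of \cite{kohayakawa2014upper}, which ensures the hypothesis as soon as $K_r$ is strictly balanced with respect to the generalised density $J \mapsto d_2(K_\ell, J)$. Concretely, one must check that for every proper subgraph $J \subsetneq K_r$ with $|E(J)| \ge 1$,
\begin{align*}
d_2(K_\ell, J) \;=\; \frac{|E(J)|}{|V(J)| - 2 + 1/m_2(K_\ell)} \;<\; \frac{\binom{r}{2}}{r - 2 + 1/m_2(K_\ell)} \;=\; d_2(K_\ell, K_r).
\end{align*}
This is a short combinatorial check: for fixed $|V(J)|$ the left-hand side is maximised by the clique on $|V(J)|$ vertices, so it suffices to verify strict monotonicity of $v \mapsto \binom{v}{2}/(v - 2 + 1/m_2(K_\ell))$ in $v \geq 2$. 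That monotonicity boils down to the sign of a single linear expression in $v$ and $1/m_2(K_\ell)$, and the assumptions $\ell \ge 3$ and $r > \ell$ are exactly what is needed to make the inequality strict in the relevant range.

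With strict balance established, Lemma 13 supplies the hypothesis of Lemma 23, and Lemma 23 then delivers the claimed tail bound with suitable constants $C$, $b$, and $n_0$. The main (and essentially only) obstacle I anticipate is notational: unwrapping the paper-specific terminology around strictly balanced pairs, the parameter $x^*$, and the precise form of the ``nice'' error probability in \cite{kohayakawa2014upper}, and confirming that under the identifications $G = K_\ell$, $H = K_r$ all quantities specialise correctly to the ones in our statement. Since the author explicitly chooses not to recount those internal details, the proof plan reduces to the density inspection above plus the invocation of Lemma 23.
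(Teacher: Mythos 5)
Your proposal follows exactly the paper's route: the paper likewise deduces this lemma from Lemma 23 of \cite{kohayakawa2014upper} by setting $G=K_\ell$, $H=K_r$, and verifies the hypothesis of that lemma via Lemma 13 after checking that $K_r$ is strictly balanced with respect to $d_2(K_\ell,\cdot)$. The approach and the supporting lemmas invoked are the same, so there is nothing to add.
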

Additionally, we need to know that there exists a $p$ that lies between the bounds given in Lemma \ref{KSSL}.  The following lemma ensures that and it follows from Lemma~13 (in \cite{kohayakawa2014upper}), once more using that $K_r$ is strictly balanced with respect to $d_2(K_l,\cdot)$.
\begin{lemma}[\cite{kohayakawa2014upper}]\label{x*Lemma}
For integers $3 \leq \ell < r$ we have $$m_2(K_l,K_r)<x^*(K_r).$$
\end{lemma}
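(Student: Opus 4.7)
The plan is to reduce the statement to a direct application of Lemma 13 of \cite{kohayakawa2014upper}, using the same hook the authors just invoked in the paragraph preceding Lemma \ref{KSSL}. In that passage, the authors extract from Lemma 13 the fact that $m_2(G,H) < x^*(H)$ whenever $H$ is strictly balanced with respect to $d_2(G,\cdot)$, meaning that $d_2(G,J) < d_2(G,H)$ for every proper subgraph $J \subsetneq H$ with at least one edge. So the entire task boils down to verifying this strict balancedness for the pair $(G,H) = (K_\ell, K_r)$.

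To check it, first observe that for any proper $J \subsetneq K_r$ with $|E(J)| \geq 1$, replacing $J$ by the complete graph on its vertex set can only enlarge the numerator $|E(J)|$ while leaving $|V(J)|$, and hence the denominator $|V(J)| - 2 + 1/m_2(K_\ell)$, unchanged. Thus it suffices to prove that the one-variable function
\[
s \;\longmapsto\; d_2(K_\ell, K_s) \;=\; \frac{\binom{s}{2}}{s-2+1/m_2(K_\ell)}
\]
is strictly increasing on the range $2 \leq s \leq r$, with the convention at $s=2$ coming from the special case in the definition of $d_2$. This is a routine monotonicity computation: comparing consecutive values and clearing denominators reduces to a linear inequality in $s$ that is easily seen to hold because $1/m_2(K_\ell) > 0$.

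The only genuine obstacle is this elementary monotonicity check, which I would carry out just enough to confirm strict inequality (not just $\leq$), since strictness is what distinguishes $x^*$ from $m_2$ here. Once strict balancedness is in hand, Lemma 13 of \cite{kohayakawa2014upper} delivers $m_2(K_\ell, K_r) < x^*(K_r)$ immediately.
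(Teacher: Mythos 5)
Your overall route is the same as the paper's: Lemma~\ref{x*Lemma} is deduced from Lemma~13 of \cite{kohayakawa2014upper}, the only substantive input being that $K_r$ is strictly balanced with respect to $d_2(K_\ell,\cdot)$ --- a fact the paper declares ``easy to check'' and you set out to verify. Your reduction to complete subgraphs is fine (discard isolated vertices, then complete on the vertex set; if $J$ already spans all $r$ vertices, strictness comes from the numerator). However, your monotonicity claim fails at its left endpoint. Writing $c=1/m_2(K_\ell)=2/(\ell+1)$, the comparison of consecutive values $d_2(K_\ell,K_s)<d_2(K_\ell,K_{s+1})$ clears denominators to the linear inequality $s>3-2c$. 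This does hold for all $s\geq 3$ because $c>0$, but for $s=2$ it requires $c>\tfrac12$, i.e.\ $\ell<3$, which is excluded. Indeed $d_2(K_\ell,K_2)=m_2(K_\ell)=\tfrac{\ell+1}{2}$ while $d_2(K_\ell,K_3)=\tfrac{3(\ell+1)}{\ell+3}$, so the function actually \emph{decreases} from $s=2$ to $s=3$ for every $\ell\geq 4$ (with equality for $\ell=3$). The map $s\mapsto d_2(K_\ell,K_s)$ is therefore not increasing on $2\leq s\leq r$ as you assert.

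The conclusion survives, but the $s=2$ term must be compared with $s=r$ directly rather than through the chain of consecutive inequalities: since $\ell\leq r-1$ and $1/m_2(K_\ell)\leq \tfrac12$,
$$d_2(K_\ell,K_2)=\frac{\ell+1}{2}\leq\frac{r}{2}<\frac{r(r-1)}{2r-3}\leq\frac{\binom{r}{2}}{r-2+1/m_2(K_\ell)}=d_2(K_\ell,K_r).$$
With that single case patched (or with the observation that the relevant definition in \cite{kohayakawa2014upper} may only require the inequality for subgraphs on at least three vertices), your argument is complete and coincides with the paper's.
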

With this in hand, we will be able to complete step 2.
By the union bound, the probability that the hypothesis of \eqref{myLemmaImpl} fails can now be bounded by a term of the form $$2^n 2^{-bn^{(1-\varepsilon) r} p^{r \choose 2}}$$ (or without the $\varepsilon$ in the 'improved setting') and it will turn out that this term converges to $0$ for our range of $p$. Note that this is not surprising as $n^r p^{r \choose 2}$ is of the same order as the expected number of copies of $K_r$ which should outdo~$n$ (see parts \ref{calceven} and \ref{calcodd} of Lemma \ref{Calc}). We hope that this already outlines the proofs reasonably well. Still, on the next pages we present the proofs for both cases in great detail. 
In order to make the proofs less technical, we extract some calculations revolving around the asymmetric 2-density into another lemma.
\begin{lemma}\label{Calc}
Let $r\geq5$ be an integer and let $C$, $\varepsilon>0$.
\begin{enumerate}[label=(\roman*)]
\item \label{m2clique}
For cliques the asymmetric 2-density has the form
$$m_2(K_l,K_r)=\frac{{r \choose 2}}{r-2+2/(l+1)},$$
where $3\leq l\leq r$ is another integer.
\item \label{calceven} If $p \geq C n^{-(1-\varepsilon)/m_2(K_{\ceil{r/2}},K_r)}$  and $\varepsilon< \frac{1}{6}$, then we have
\begin{align*}
n^{(1-\varepsilon)r}p^{{r \choose 2}}\geq C^{r\choose 2}n^{\frac{5}{4}}.
\end{align*}
\item \label{calcodd} If $r$ is odd, $p \geq Cn^{-1/m_2(K_{\frac{r+1}{2}},K_r)}$  and $\varepsilon<\frac{1}{4r}$, then we have 
\begin{align*}
 n^{r}p^{{r \choose 2}} \geq n^{(1-\varepsilon)r}p^{r \choose 2} \geq C^{r \choose 2}n^{\frac{5}{4}}.
\end{align*}
\item \label{calctau}
Finally, for odd $r\geq 5$ we have
$${m_{2}(K_{\frac{r+1}{2}},K_{r})}
>{m_2(K_{\frac{r-1}{2}},K_{r})}.
$$
\end{enumerate}
\end{lemma}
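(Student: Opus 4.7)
The plan is purely computational; once part \ref{m2clique} is in place, parts \ref{calceven}--\ref{calctau} reduce to straightforward algebra. First, for \ref{m2clique}, I would compute $m_2(K_l) = (l+1)/2$ for every integer $l \geq 3$ by noting that $d_2(K_j) = (\binom{j}{2}-1)/(j-2) = (j+1)/2$ for $j\geq 3$ is maximized at $j=l$ and that the boundary term $d_2(K_2)=1/2$ is smaller. Substituting $1/m_2(K_l) = 2/(l+1)$ into the definition of $d_2(K_l,\cdot)$ yields
\[
d_2(K_l, K_j) \;=\; \frac{\binom{j}{2}}{j-2+2/(l+1)} \;=:\; f(j).
\]
To bound $d_2(K_l, J)$ over all subgraphs $J \subseteq K_r$, I would first reduce to complete subgraphs (for a fixed vertex count $v$, the complete graph maximizes $|E(J)|$), and then check that $f$ is maximized on $\{2,\dots,r\}$ at $j=r$. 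A short manipulation shows that $f(j+1)-f(j)$ has the same sign as $j-3+4/(l+1)$, which is strictly positive for $j\geq 3$; the boundary comparison $f(r)\geq f(2)$ reduces to $(r-2)(r+1) \geq (l+1)(r-2)$, which holds since $l\leq r$. This yields the claimed formula.

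For \ref{calceven} and \ref{calcodd}, I would plug the formula from \ref{m2clique} into the lower bound on $p$ and track the exponent of $n$. In \ref{calceven}, raising the hypothesis to the $\binom{r}{2}$-th power gives
\[
n^{(1-\varepsilon)r} p^{\binom{r}{2}} \;\geq\; C^{\binom{r}{2}}\, n^{(1-\varepsilon)\bigl(r - \binom{r}{2}/m_2(K_{\lceil r/2\rceil},K_r)\bigr)} \;=\; C^{\binom{r}{2}}\, n^{(1-\varepsilon)\,(2 - 2/(\lceil r/2\rceil+1))}.
\]
Since $r\geq 5$ forces $\lceil r/2\rceil \geq 3$ and hence $2/(\lceil r/2\rceil+1) \leq 1/2$, the exponent on $n$ is at least $(1-\varepsilon)\cdot 3/2$, which exceeds $5/4$ precisely when $\varepsilon \leq 1/6$. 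The calculation for \ref{calcodd} is entirely analogous: the exponent on $n$ comes out to $2 - \varepsilon r - 4/(r+3)$, which under $r\geq 5$ (so $4/(r+3)\leq 1/2$) and $\varepsilon < 1/(4r)$ (so $\varepsilon r < 1/4$) is at least $5/4$. The first inequality in \ref{calcodd} follows from the trivial bound $n^r \geq n^{(1-\varepsilon)r}$.

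For \ref{calctau}, the formula from \ref{m2clique} shows immediately that $l \mapsto m_2(K_l, K_r)$ is strictly increasing on $l\geq 3$ (the denominator $r-2+2/(l+1)$ is strictly decreasing). This settles all odd $r\geq 7$, for which both $(r\pm 1)/2 \geq 3$. The one extra case is $r=5$, where $(r-1)/2 = 2$ falls outside the scope of \ref{m2clique}; I would handle this directly by computing $m_2(K_2, K_5) = 2$ using $m_2(K_2) = 1/2$, and comparing with $m_2(K_3, K_5) = 20/7$.

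The only real obstacle is the bookkeeping in \ref{m2clique}: one has to remember to rule out non-clique subgraphs of $K_r$, and verify that the boundary term $f(2)$ does not beat $f(r)$ when $l$ is close to $r$ (this is exactly where the hypothesis $l\leq r$ is used). After that, everything is elementary algebra.
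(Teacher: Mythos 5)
Your proposal is correct and follows essentially the same route as the paper: derive the closed form in \ref{m2clique} from $m_2(K_l)=(l+1)/2$, then substitute into the exponent of $n$ for \ref{calceven} and \ref{calcodd} and use $2/(\lceil r/2\rceil+1)\leq 1/2$, $\varepsilon<1/6$ (resp.\ $4/(r+3)\leq 1/2$, $\varepsilon r<1/4$), and read \ref{calctau} off the monotonicity of the denominator. You are in fact somewhat more careful than the paper on two points it glosses over: verifying that the maximum defining $m_2(K_l,K_r)$ is attained at $J=K_r$ (including the boundary comparison with $J=K_2$), and treating the case $r=5$ in \ref{calctau}, where $(r-1)/2=2$ lies outside the scope of \ref{m2clique} and must be computed via $m_2(K_2)=1/2$.
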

\begin{proof}
\begin{enumerate}[label=(\roman*)]
\item This follows from 
$$
m_2(K_l,K_r)=d_2(K_l,K_r)=\frac{{r \choose 2}}{r-2+1/m_2(K_l)}
$$
and 
\begin{align*}
m_2(K_l)= d_2(K_l)=\frac{{l \choose 2}-1}{l-2}= \frac{\frac{(l+1)(l-2)+2}{2}-1}{l-2}= \frac{l+1}{2}.
\end{align*}
\item Using \ref{m2clique} we get
\begin{align*}
 n^{(1-\varepsilon)r}p^{{r \choose 2}}&\geq C^{r \choose 2} n^{(1-\varepsilon)(r-(r-2+\frac{2}{\ceil{r/2}+1}))}\\
 &\geq C^{r \choose 2} n^{(1-\varepsilon)(2-\frac{2}{3+1})}
\geq  C^{r \choose 2} n^{\frac{5}{6}\cdot \frac{3}{2}}=C^{r \choose 2} n^{\frac{5}{4}}.
\end{align*}
\item The first inequality in the statement is trivial and the second can be verified by
\begin{align*}
 n^{(1-\varepsilon)r}p^{r \choose 2}\mathrel{\overset{\makebox[0pt]{\mbox{\normalfont\tiny\sffamily {\ref{m2clique}}}}}{\geq}} C^{r \choose 2} n^{(1-\varepsilon)r-(r-2+\frac{2}{(r+1)/2+1})}\geq C^{r \choose 2} n^{r-\frac{1}{4}-r+2-\frac{1}{2}}
 =  C^{r \choose 2} n^{\frac{5}{4}}.
\end{align*}
\item  This can be seen easily after applying \ref{m2clique} to both sides.
\end{enumerate}
\end{proof}

We start with the general case where $r\geq 5$ is an arbitrary integer and $\varepsilon>0$ is given.

\begin{proof}[Proof of Theorem \ref{mainresult1}\ref{mainresult1even}]
Throughout this proof let $m_2 \coloneqq m_2(K_{\ceil{\frac{r}{2}}},K_r)$ and \mbox{$x^* \coloneqq x^*(K_r)$}.
By monotonicity we may assume $\varepsilon \leq \frac{1}{6}$ and $(1-\varepsilon)x^*>m_2$. We start by applying our preparatory lemmas:
\begin{itemize}
\item Lemma \ref{KSSL} yields constants $C=C(\ceil{\frac{r}{2}},r)$, $b=b(\ceil{\frac{r}{2}},r)$ and $n_1=n_1(\ceil{\frac{r}{2}},r)$  such that
\begin{align} \label{Proofr}
\Pr (G(\tilde{n},\tilde{p}) \nrightarrow(K_{\ceil{r/2}}, K_{r}))\leq 2^{-b\tilde{n}^{r} \tilde{p}^{ {r \choose 2}}}
\end{align}
for all integers $\tilde{n} \geq n_1$ and any $\tilde{p}$ with $$\tilde{n}^{-1/x^*} \geq \tilde{p} \geq C\tilde{n}^{-1/m_{2}}.$$
 \item Lemma \ref{mylemma} yields $\alpha =\alpha(\gamma,\varepsilon,r)$ and $n_2 =n_2(\gamma,\varepsilon,r)$ such that \eqref{myLemmaImpl} holds for graphs $G$ with $|V(G)| \geq n_2$.
 
\end{itemize}
Let $n_0 \in \N$ satisfy $$n_0\geq \max (n_1^{1/(1-\varepsilon)},n_2, \alpha^{-1/\varepsilon})$$ and additionally 
$$n^{-1/x^*} \geq C n^{-(1-\varepsilon) /m_{2}}$$
for all integers $n \geq n_0;$
the latter is possible due to the assumption we made in the beginning of the proof.
Now let an integer $n\geq n_0$ be given.
By monotonicity we may assume $$p= C n^{-(1-\varepsilon) /m_{2}}.$$
Then for any $U \subseteq \{1,...,n\}$ with $|U| \geq n^{1-\varepsilon}$ we may apply (\ref{Proofr}) to $G(n,p)[U]$ (which we may identify with the random graph $G(|U|,p)$), since we have
\begin{equation*}
|U|\geq n^{1- \varepsilon}\geq n_0^{1- \varepsilon} \geq n_1  
\end{equation*}
and 
\begin{align*}
|U|^{-1/x^*} \geq n^{-1/x^*}  \geq p= C n^{-(1-\varepsilon) /m_{2}} \geq C|U|^{-1/m_{2}}.
\end{align*}
 \eqref{Proofr} yields 
\begin{align}\label{forfinalBlockeven}
\Pr (G(n,p)[U] \nrightarrow(K_{\ceil{r/2}}, K_{r}))\leq 2^{-bn^{(1-\varepsilon)r} p^{ {r \choose 2}}}.
\end{align}
Owing to $\alpha n = \alpha n^{\varepsilon}n^{1-\varepsilon}\geq \alpha \alpha^{(-1/\varepsilon)\varepsilon}n^{1-\varepsilon}= n^{1-\varepsilon}$,  the following statement implies \eqref{myLemmaImpl}:
\begin{align*}
& \text{If we have }G(n,p)[U] \rightarrow(K_{\ceil{r/2}}, K_{r})\text{ for all subsets }U \subseteq V \text{ with } |U|\geq n^{1-\varepsilon}, \\ & \text{then }G(n,p) \rightarrow(K_{r})\text{.}
\end{align*}
 Therefore, we have
\begin{align*}
& \Pr (G_{n}\cup G(n,p) \nrightarrow (K_{r}))  \\
\leq \,\,& \Pr( \exists \, U \subseteq V: |U|\geq n^{1-\varepsilon} \land G(n,p)[U]\nrightarrow (K_{\ceil{r/2}},K_r))\\
\leq \,\,& \sum_{U \subseteq V: |U|\geq n^{1-\varepsilon}}\Pr(G(n,p)[U]\nrightarrow (K_{\ceil{r/2}},K_r))\\
\mathrel{\overset{\makebox[0pt]{\mbox{\normalfont\tiny\sffamily {(\ref{forfinalBlockeven})}}}}{\leq}} \,\,& \sum_{U \subseteq V: |U|\geq n^{1-\varepsilon}} 2^{-b\left(n^{1-\varepsilon}\right)^{r} p^{ {r \choose 2}}}
\leq  2^{n-b C^{{r \choose 2}}n^{\frac{5}{4}}},\\
\end{align*}
where the last inequality uses Lemma \ref{Calc}\ref{calceven}.
This proves the theorem since the last term converges to $0$ when $n$ goes to infinity.
\end{proof}

Now as to the more specific case where $r\geq 5$ is odd:
\begin{proof}[Proof of Theorem \ref{mainresult1}\ref{mainresult1odd}.]
Throughout the proof let $m_2^+ \coloneqq m_2(K_{\frac{r+1}{2}},K_r)$, $m_2^- \coloneqq m_2(K_{\frac{r-1}{2}},K_r)$ and $x^* \coloneqq x^*(K_r).$
By Lemma \ref{Calc}\ref{calctau} there exists a $\tau>0$ such that $m_2^+= m_2^-\cdot\frac{1}{1- \tau}$ and we let $\varepsilon= \min(\tau,\frac{1}{4r})>0$. Again, we start by applying our preparatory lemmas: 
\begin{itemize}
\item Lemma \ref{KSSL} yields constants $C_1=C_1(\frac{r+1}{2},r)$, $b_1=b_1(\frac{r+1}{2},r)$ and $n_1=n_1(\frac{r+1}{2},r)$  such that
\begin{align} \label{Proofr+1}
\Pr (G(\tilde{n},p) \nrightarrow(K_{\frac{r+1}{2}}, K_{r}))\leq 2^{-b_1\tilde{n}^{r} p^{ {r \choose 2}}}
\end{align}
for all integers $\tilde{n} \geq n_1$ and any $p$ with $$\tilde{n}^{-1/x^*} \geq p \geq C_1\tilde{n}^{-1/m_{2}^+}.$$
\item Lemma \ref{KSSL} yields constants $C_2=C_2(\frac{r-1}{2},r)$, $b_2=b_2(\frac{r-1}{2},r)$ and $n_2=n_2(\frac{r-1}{2},r)$  such that
\begin{align} \label{Proofr-1}
\Pr (G(\tilde{n},p) \nrightarrow(K_{\frac{r-1}{2}}, K_{r}))\leq 2^{-b_2\tilde{n}^{r} p^{ {r \choose 2}}}
\end{align}
for all integers $\tilde{n} \geq n_2$ and any $p$ satisfying $$\tilde{n}^{-1/x^*} \geq p \geq C_2\tilde{n}^{-1/m_{2}^-}.$$
 \item Lemma \ref{mylemma} yields $\alpha =\alpha(\gamma,\varepsilon,r)$ and $n_3 =n_3(\gamma,\varepsilon,r)$ such that \eqref{myLemmaImpl} holds for graphs $G$ with $|V(G)|\geq n_3$. We may assume $\alpha \leq 1$.
\end{itemize}
Now let $$C=\max(C_1,C_2)\cdot\alpha^{-1/m_2^+}\geq \max(C_1,C_2) $$ and let $n_0$ satisfy $n_0\geq  \max(\frac{n_1}{\alpha},n_2^{1/(1-\varepsilon)},n_3)$ and additionally  $$n^{-1/x^*}\geq Cn^{-1/m_{2}^+}$$ for all integers $n \geq n_0$,
which is possible by Lemma \ref{x*Lemma}. Now let an integer $n \geq n_0$ be given. By monotonicity we may assume $$p= Cn^{-1/m_{2}^+}.$$ 
\begin{itemize}
\item For any $U \subseteq \{1,...,n\}$ with $|U|\geq \alpha n$ we may apply \eqref{Proofr+1} to $G(n,p)[U]$ (which we may identify with $G(|U|,p)$), as we have 
$|U|\geq \alpha n \geq n_1$ and 
\begin{align*}
 |U|^{-1/x^*} &\geq n^{-1/x^*} \geq p= Cn^{-1/m_{2}^+}\\
 & \geq C_1(\alpha n)^{-1/m_{2}^+}\geq C_1|U|^{-1/m_{2}^+}.
\end{align*}
 (\ref{Proofr+1}) shows 
\begin{align}\label{forfinalblock1}
\Pr (G(n,p)[U] \nrightarrow(K_{\frac{r+1}{2}}, K_{r}))\leq 2^{-b_1(\alpha n)^{r} p^{ {r \choose 2}}}.
\end{align}
\item 
Analogously, if $r\geq 7$, for any $U \subseteq \{1,...,n\}$ with $|U|\geq n^{1-\varepsilon}$ we may apply \eqref{Proofr-1} to $G(n,p)[U]$ since we have $|U|  \geq n^{1-\varepsilon}\geq n_2$ and 
\begin{align*}
  |U|^{-1/x^*}\geq n^{-1/x^*} \geq p  = Cn^{-1/m_{2}^+} = Cn^{-(1-\tau)/m_{2}^-} 
  &\geq C_2n^{-(1-\varepsilon)/m_{2}^-}\\ &\geq C_2|U|^{-1/m_{2}^-},
\end{align*}
where we used the definitions of $\tau$ and $\varepsilon$. We get
\begin{align}\label{forfinalblock2}
\Pr (G(n,p)[U] \nrightarrow(K_{\frac{r-1}{2}}, K_{r}))\leq 2^{-b n^{(1-\varepsilon)r} p^{ {r \choose 2}}},
\end{align}
where $b=\min(b_2, c_5)$ with $c_5$ being the constant yielded by Theorem~\ref{Janson}. This follows from (\ref{Proofr-1}) for $r\geq 7$ and from Theorem~\ref{Janson} for $r=5$. Note that in the latter case $\frac{r-1}{2}=2$, and thus $G \shortrightarrow (K_{\frac{r-1}{2}},K_r)$ is just containing a copy of $K_r$. Therefore, the asymmetric Ramsey result is not applicable in that case and we have to go for Janson's.
\end{itemize}

Using (\ref{myLemmaImpl}) we get
\begin{align*}
& \Pr (G_{n}\cup G(n,p) \nrightarrow (K_{r})) \\
 & \leq \Pr ((\exists \, U \subseteq V :|U| \geq \alpha n \land G(n,p)[U] \nrightarrow(K_{\frac{r+1}{2}}, K_{r})) \lor \\
 & \,\,\,\,\,\,\,\,\,\,\,\,\,\,\,\,\,(\exists \, U \subseteq V :|U| \geq n^{1-\varepsilon} \land G(n,p)[U] \nrightarrow(K_{\frac{r-1}{2}}, K_{r}) )) \\
  & \leq  \sum_{U \subseteq V: |U|\geq \alpha n} \Pr (G_(n,p)[U] \nrightarrow(K_{\frac{r+1}{2}}, K_{r}))+\\
  & \,\,\,\,\,\,\, \sum_{U \subseteq V: |U|\geq n^{1-\varepsilon}} \Pr (G(n,p)[U] \nrightarrow(K_{\frac{r-1}{2}}, K_{r})) \\
   & \mathrel{\overset{\makebox[0pt]{\mbox{\normalfont\tiny\sffamily {\eqref{forfinalblock1},\eqref{forfinalblock2}}}}}{\leq}}   \sum_{U \subseteq V: |U|\geq \alpha n} 2^{-b_1(\alpha n)^r p^{{ r \choose 2}}}+\sum_{U \subseteq V: |U|\geq n^{1-\varepsilon}} 2^{-b\left(n^{1-\varepsilon}\right)^{r} p^{ {r \choose 2}}}\\
  & \leq  2^{n-b_1 C^{{r \choose 2}}\alpha^{r}n^{\frac{5}{4}}}+ 2^{n-bC^{r \choose 2} n^{\frac{5}{4}}}, 
\end{align*}
where the last inequality follows from Lemma \ref{Calc}\ref{calcodd}, which is applicable due to $\varepsilon \leq \frac{1}{4r}$.
This proves the theorem because the last term goes to $0$ as~$n$ approaches infinity.
\end{proof} 

\subsection{Brief discussion of $\boldsymbol{r\leq 4}$ and the proof of Theorem \ref{K4thm}}
Considerations are of a different nature for the clique sizes ${r=3}$ and $r=4$. In these cases ${G\shortrightarrow (K_{\ceil{\frac{r}{2}}},K_r})$ comes down to containing a copy of $K_r$. Proceeding as in Theorem \ref{mainresult0}, we obtain a lower bound of $p=n^{-\frac{1}{\rho(K_r)}}$ (see Theorem \ref{subgraphthreshold}). We also obtain an upper bound based on Lemma \ref{mylemma}; however, we need~$p$ to be of order $$n^{-\frac{1}{\rho(K_r)}+\frac{1}{|E(K_r)|}}$$ as we can only apply this technique if the expected number of copies of $K_r$ is linear.
This leaves us with a significant gap between the bounds. For $r=3$ the threshold coincides with the lower bound (see Theorem \ref{K3KtThm}), whereas for $r=4$ we will improve the lower bound so that it matches the order of the upper bound.
Let us start with a brief proof of the upper bound; the method is the same as in the proof of Theorem~\ref{mainresult1} where we went into great detail.
\begin{proof}[Proof of Theorem \ref{K4thm}\ref{1statementK4}]
Following the lines of the proof of Theorem~\ref{mainresult1}\ref{mainresult1even} and applying Theorem~\ref{Janson} yields
\begin{align*}
 \Pr(G_n \cup G(n,p)\nrightarrow (K_4))
&\leq \Pr(\exists U \subseteq V : |U| \geq n^{1- \varepsilon} \land G(n,p)[U] \text{ is } K_4 \text{-free})\\
 &\leq 2^n 2^{-c_r n^{(1-\varepsilon)4}n^{(-\frac{1}{2}+\varepsilon)6}}= 2^{n-c n^{1+2 \varepsilon}},
 \end{align*}
 for a sufficiently large $n$ and an appropriate constant $c>0$. Clearly, the last term converges to $0$ as desired.
\end{proof}
We now turn our attention to the lower bound and start by introducing our key lemma which we will prove later.  We write $G\rightarrow (H)^v$ if every 2-colouring of $V(G)$ admits a monochromatic copy of $H$.
\begin{lemma}\label{basicLemmaK_4}
Let $G=(V_1 \mathop{\dot{\cup}} V_2,E)$ be a graph such that $G[V_1]\nrightarrow (K_3)$, ${G[V_2]\nrightarrow (K_4)^v}$ and $G[V_2]$ does not contain any subgraph $H$ with $\rho(H)\geq \frac{1}{2}$ and $|V(H)|\leq 8$. Then we have $G\nrightarrow (K_4,K_4)$.
\end{lemma}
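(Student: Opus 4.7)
The plan is to build an explicit red--blue edge colouring $\chi$ of $G$ that avoids a monochromatic $K_4$. Using $G[V_1]\nrightarrow(K_3)$, fix an edge $2$-colouring $\phi_1$ of $G[V_1]$ without a monochromatic triangle, and using $G[V_2]\nrightarrow(K_4)^v$, fix a vertex $2$-colouring $\psi\colon V_2\to\{\text{red},\text{blue}\}$ such that no four vertices of one $\psi$-colour induce a $K_4$ in $G[V_2]$. Define $\chi$ by setting $\chi(e)=\phi_1(e)$ for $e\in E(G[V_1])$, $\chi(uv)=\psi(v)$ for a cross edge with $u\in V_1$ and $v\in V_2$, and for an edge $vv'\in E(G[V_2])$ take $\chi(vv')=\overline{\psi(v)}$ whenever $\psi(v)=\psi(v')$ and arbitrary otherwise. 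The key design choice is the \emph{flip}: whenever the two $V_2$-endpoints agree under $\psi$, the edge between them receives the opposite colour.

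Now suppose towards a contradiction that some $Q\subseteq V(G)$ spans a monochromatic $K_4$ of colour $c$ under $\chi$, and let $k=|Q\cap V_2|$. First I would eliminate $k\in\{3,4\}$ by sparsity: for $k=4$ the set $Q\cap V_2$ would span a $K_4$ in $G[V_2]$, a subgraph of density $3/2$ on $4\le 8$ vertices, excluded by hypothesis; for $k=3$ the three $V_2$-vertices would span a $K_3$ in $G[V_2]$, density $1$ on $3\le 8$ vertices, again excluded. If $k=0$ then $Q\subseteq V_1$ and the monochromatic $K_4$ contains a monochromatic $K_3$ under $\phi_1$, a contradiction. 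If $k=1$ the three vertices of $Q\cap V_1$ still span a triangle whose edges are all coloured $c$ by $\phi_1$, the same contradiction. If $k=2$, write $Q=\{u_1,u_2,v_1,v_2\}$ with $u_i\in V_1$ and $v_j\in V_2$; the four cross edges force $\chi(u_iv_j)=\psi(v_j)=c$, hence $\psi(v_1)=\psi(v_2)=c$, and the flip rule then yields $\chi(v_1v_2)=\overline{c}\neq c$, contradicting monochromaticity.

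The only delicate case is $k=2$: without the flip rule, the constraint $\psi(v_1)=\psi(v_2)=c$ together with $\phi_1(u_1u_2)=c$ and an unfortunate colour on $v_1v_2$ could close up into a genuine monochromatic $K_4$, so the colouring inside $V_2$ has to be tailored precisely to this case. The remaining cases are routine: the sparsity hypothesis kills $k\in\{3,4\}$ straight from small-subgraph densities, while $\phi_1$ kills $k\in\{0,1\}$. In fact only the sparsity condition on $3$- and $4$-vertex subgraphs enters this argument, so the bound $|V(H)|\leq 8$ is much stronger than what this particular lemma needs.
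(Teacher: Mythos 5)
There is a genuine gap, and it sits exactly where the substance of the lemma lies. The density hypothesis as printed ($\rho(H)\geq\tfrac12$) is evidently a typo: read literally it excludes even a single edge (since $\rho(K_2)=\tfrac12$), so $G[V_2]$ would be edgeless and the lemma vacuous. The intended condition --- the one matching both the application (where $p=o(n^{-1/2})$ excludes, via Theorem~\ref{subgraphthreshold}, subgraphs with density at least $2$) and the contradiction $|E(M)|\geq 2|M|$ that the argument must ultimately reach --- is $\rho(H)\geq 2$. Under that hypothesis $G[V_2]$ may contain many triangles and copies of $K_4$ (densities $1$ and $3/2$), so your disposal of the cases $k=3$ and $k=4$ by ``small dense subgraph'' collapses. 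The case $k=3$ happens to be salvageable by the same flip argument you use for $k=2$ (a monochromatic colour $c$ forces $\psi(v_j)=c$ for all three $V_2$-vertices, and then all three edges among them are flipped to $\overline{c}$), but $k=4$ is genuinely broken: take a copy of $K_4$ in $G[V_2]$ on which $\psi$ uses three red vertices and one blue one; the three edges inside the red class are flipped to blue, while the remaining three edges are ``arbitrary'' under your rule and may all be blue, producing a blue $K_4$. Tellingly, your argument never uses the vertex-Ramsey hypothesis $G[V_2]\nrightarrow(K_4)^v$ beyond defining $\psi$, which signals that the hard case has been assumed away.

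The paper's proof is built around precisely this case. It takes a colour class $U\subseteq V_2$ of the vertex colouring, so that $G[U]$ is $K_4$-free and every copy $L$ of $K_4$ in $G[V_2]$ meets $U$; from each such $L$ it selects one edge $e_L$ incident with a vertex of $U\cap L$, colours red exactly the edges inside $U$ together with these selected edges, and colours the remaining edges of $G[V_2]$ blue (cross edges are coloured by membership of their $V_2$-endpoint in $U$, much like your $\psi$-rule). A blue $K_4$ inside $V_2$ is impossible because its selected edge is red; a red $K_4$ $L$ inside $V_2$ must contain a vertex $z\notin U$ all of whose edges in $L$ were selected for other copies $L_1,L_2$, and the union $M=L\cup L_1\cup L_2$ then has at most $8$ vertices and at least $2|M|$ edges, contradicting the (corrected) density hypothesis --- this is where the bounds $8$ and $2$ come from. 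To repair your proof you would need to incorporate this selection-and-counting mechanism; the flip rule alone cannot handle a monochromatic $K_4$ lying entirely in $V_2$.
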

Let us now first show that Lemma~\ref{basicLemmaK_4} implies Theorem~\ref{K4thm}\ref{0statementK4}. We need the following special case of a result on the threshold for vertex-Ramsey properties by {\L}uczak, Ruci{\'n}ski and Voigt \cite{luczak1992ramsey}.
\begin{theorem}\label{vertexRamseyK4}
There exists a real constant $c>0$ such that for $p\leq cn^{-\frac{1}{2}}$ we have
$$
\lim\limits_{n \rightarrow \infty} \Pr(G(n,p)\shortrightarrow(K_4)^v)=0.
$$
\end{theorem}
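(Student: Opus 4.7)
The goal is to show that for sufficiently small $c>0$ and $p \leq c n^{-1/2}$, a.a.s.\ there exists a partition $V(G(n,p)) = V_1 \cup V_2$ such that neither induced subgraph contains a $K_4$. The exponent $-1/2$ matches the vertex-Ramsey parameter $m_1(K_4) = |E(K_4)|/(|V(K_4)|-1) = 6/3 = 2$, which explains why this value of $p$ is the natural candidate threshold.

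The plan is to combine the deletion method with the Lov\'asz Local Lemma. A direct calculation shows that for $p = cn^{-1/2}$ the expected number of copies of $K_4$ in $G(n,p)$ is of order $n^4 p^6 = \Theta(c^6 n)$ (linear), while the expected number of copies incident to any fixed vertex is only of order $n^3 p^6 = O(c^6)$ (constant). A further count shows that the expected number of pairs of $K_4$-copies sharing two or three vertices is sublinear (of order $n^{1/2}$), so the ``interaction graph'' between $K_4$-copies is very sparse.

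The first step is a preliminary cleanup: by Markov's inequality and a moment estimate for the per-vertex $K_4$-degree, I would find a.a.s.\ a set $S \subseteq V$ with $|S| = o(n)$ such that, in $G' := G(n,p) - S$, (i) any two distinct copies of $K_4$ share at most one vertex and (ii) every vertex belongs to at most $D = D(c)$ copies of $K_4$, where $D$ can be made as small as desired by shrinking $c$. The second step models the remaining problem as a hypergraph $2$-colouring question: the $4$-uniform hypergraph $\mathcal{H}$ on $V(G')$ whose edges are the vertex sets of the surviving $K_4$-copies has each hyperedge dependent on at most $4(D-1)$ others, and a uniform random $2$-colouring of $V(\mathcal{H})$ makes each hyperedge monochromatic with probability $2^{-3}$. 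For $c$ small enough, the symmetric Lov\'asz Local Lemma delivers a proper $2$-colouring of $V(G')$. Finally, since $|S| = o(n)$ and each vertex of $S$ lies in only $O(1)$ copies of $K_4$ on average, one extends this colouring to $S$ by a short greedy (or second LLL) argument, producing the desired partition.

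The main obstacle is calibrating step (i)--(ii) of the cleanup so that $|S|$ is genuinely $o(n)$ while the residual maximum $K_4$-degree is pushed down to the absolute constant required by LLL. The per-vertex $K_4$-count has constant expectation but a polynomial upper tail, so one has to estimate the number of ``heavy'' vertices (by higher moments or by a direct enumeration of $K_4$-intersection patterns) and absorb them into $S$ along with one vertex from each pair of overlapping $K_4$-copies. This is the only genuinely quantitative step; once these parameters are in hand, the LLL application is essentially automatic. An alternative, more conceptual route is to invoke the general threshold framework of \cite{luczak1992ramsey} directly, but the outline above explains why $n^{-1/2}$ is the natural exponent in this special case.
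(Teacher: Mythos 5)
First, a point of comparison: the paper does not prove this theorem at all --- it is quoted as a special case of the \L uczak--Ruci\'nski--Voigt result, so the ``alternative route'' you mention at the end is in fact the paper's entire argument. Your attempt at a self-contained proof is a reasonable instinct, but as written it has two concrete gaps. The first is in the cleanup step: while pairs of $K_4$-copies sharing two or three vertices are indeed rare (expected number $\Theta(n^6p^{11})=\Theta(c^{11}\sqrt n)$ resp.\ $\Theta(n^5p^{9})=\Theta(c^{9}\sqrt n)$), the pairs sharing exactly \emph{one} vertex number $\Theta(n^7p^{12})=\Theta(c^{12}n)$, and more generally the expected number of vertices lying in at least $D$ copies of $K_4$ is $\Theta(n^{1+3D}p^{6D})=\Theta(c^{6D}n)$ for every fixed $D$; these counts are concentrated, so a.a.s.\ a positive (if small) fraction of the vertices are ``heavy''. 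Hence no set $S$ with $|S|=o(n)$ can reduce the maximum $K_4$-degree to a constant, and your claim (ii) fails. The second gap is the local lemma itself: for $2$-colouring a $4$-uniform hypergraph the bad events have probability $2^{-3}$, so the symmetric LLL requires dependency degree $d\leq 8/e-1<2$, i.e.\ each $K_4$ may meet at most \emph{one} other copy; your bound $d\leq 4(D-1)$ already violates this for $D=2$, so the application is not ``essentially automatic'' even if the cleanup could be done. (The final greedy extension to $S$ is also suspect, since $S$ consists precisely of the vertices of largest $K_4$-degree.)

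The repair, which is essentially what \cite{luczak1992ramsey} does, is structural rather than LLL-based: one shows that a.a.s.\ every union of $K_4$-copies spanning $v$ vertices with at least $2v-1$ edges is absent or rare (expected count $O(c^{e}n^{v-e/2})=O(\sqrt n)$), so that after deleting $o(n)$ vertices every connected cluster of $K_4$-copies is ``tree-like'' (copies glued along single vertices, with no cycles in the intersection structure) and of size $O(\log n)$; such clusters are vertex-disjoint and each can be $2$-coloured greedily, copy by copy, since every new copy contributes three fresh vertices. Your density computations ($n^4p^6=\Theta(c^6n)$ and $n^3p^6=O(c^6)$) correctly identify why $n^{-1/2}$ is the threshold, but the step from ``constant expected degree'' to ``bounded maximum degree after small deletion'' is exactly where the argument needs the cluster analysis rather than Markov plus the LLL.
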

\begin{proof}[Proof of Theorem \ref{K4thm}\ref{0statementK4}] 
As in the proof of Theorem~\ref{mainresult0} we choose  $G_n=K_{\ceil{\frac{n}{2}},\floor{\frac{n}{2}}}$ and we denote the partition classes by~$V_1$ and~$V_2$.
We show that $H_n=G_n \cup G(n,p)$ a.a.s.\ satisfies the hypothesis of Lemma~\ref{basicLemmaK_4}. 
By Theorem~\ref{RRT} we a.a.s.\ have ${G(n,p)\nrightarrow (K_3)}$ and thus $H_n[V_1] \nrightarrow (K_3).$ Theorem~\ref{vertexRamseyK4} ensures ${G(n,p)\nrightarrow (K_4)^v}$ and thus $H_n[V_2]\nrightarrow (K_4)^v$. Finally, by Theorem \ref{subgraphthreshold} $G(n,p)$ (and thus $H_n[V_2]$) a.a.s.\ does not contain any of the finitely many graphs $H$ with $\rho(H)\geq 2$ and $|V(H)|\leq 8$ as a subgraph. Hence the theorem follows from Lemma~\ref{basicLemmaK_4}.
\end{proof}

It remains to prove Lemma~\ref{basicLemmaK_4}. We let $${\mathcal{C}_G=\lbrace M \subseteq V^{(4)}\, |\, G[M] \text{ is a copy of }K_{4}\rbrace }$$ be the family of vertex subsets inducing a copy of $K_4$ in a graph $G$. 
\begin{proof}[Proof of Lemma~\ref{basicLemmaK_4}]
We will define a red-blue-colouring $\phi$ of $E(G)$ in three steps.
By assumption, there is a colouring $\phi_1$ of $E(V_1)$ which yields no monochromatic copies of $K_3$.

There also exists a vertex colouring of $V_2$ that does not admit monochromatic copies of $K_4$. Let $U \subseteq V_1$ be one of its two colour classes, then $G[U]$ is $K_4$-free and for any $L \in \mathcal{C}_{G[V_2]}$ we have $U \cap L\neq \varnothing$. For any such $L$ we choose an arbitrary vertex $a_L \in U \cap L$ and an arbitrary edge $e_L \in E(L)$ incident with $a_L$. Let $$E_s=\bigcup\limits_{L \,\in \, \mathcal{C}_{G[V_2]}} e_L.$$

Now we define a colouring $\phi_2$ of $E(V_2)$ as follows. For $e=vw \in E(V_2)$ we let
\begin{align*}
\phi_2(e)=
\begin{cases}
\text{red, if } v,w \in U \lor e \in E_s \\
\text{blue, otherwise}
\end{cases}
\end{align*}
Finally, for an edge $e=av$ with $a \in V_1$, $v \in V_2$ define
\begin{align*}
\phi_3(e)=
\begin{cases}
\text{blue, if } v \in U\\
\text{red, otherwise}
\end{cases}
\end{align*}
We claim that $\phi=\phi_1 \cup \phi_2 \cup \phi_3$ does not yield any monochromatic copy of $K_4$.

Suppose first that $\phi$ admits a blue copy $L$ of $K_4$.
Then, owing to our choice of $\phi_1$, we have $|L\cap V_1| \leq 2$. On the other hand, $|L\cap V_2| \leq 3$, since $L \subseteq V_2$ would imply that $e_L \in E(L)$ is a red edge. It follows from the two above observations that there are two distinct vertices $v,w \in L \cap V_2$ and $a \in L \cap V_1$. Since $av$ and $aw$ are blue, it follows from the defintion of $\phi_3$ that $v,w \in U$. However, this means that $vw$ is red in $\phi_2$, yielding a contradiction.

Let us now suppose that  $L \in \mathcal{C}_G$ is a red copy.
Again, we have $|L\cap V_2| \geq 2$ due to our choice of $\phi_1$. In case of $|L\cap V_2| \leq 3$ we again find two distinct vertices $v,w \in L\cap V_2$ and $a \in L \cap V_1$. Since $vw$ is red, by the defintion of $\phi_2$, at least one of the vertices~$v$ and~$w$ is in~$U$, and thus either~$va$ or~$wa$ is blue.

It remains to check the case $|L\cap V_1| =4$. Recall that there are no copies of $K_4$ in~$G[U]$. Therefore, there exists a vertex $z \in L\setminus U$.  Hence, all edges in $E(L)$ which are incident with $z$ were chosen to be $e_{L'}$ for some $L' \in \mathcal{C}_{G[V_2]}$. This yields two further sets $L_1, L_2 \in \mathcal{C}_{G[V_2]}$ with 
$|L \cap L_i| \geq 2$ for $i=1,2$ and $L \neq L_1 \neq L_2 \neq L$.

We will show that for $M= L \cup L_1 \cup L_2$ we have
\begin{align}\label{EM2M}
|E(M)|\geq 2|M|,
\end{align}
contradicting the last assumption of the lemma because of $|M| \leq 8$. Let 
\begin{align*}
H=G[L], \, H_1=G[L] \cup G[L_1], \, H_2=H_1 \cup G[L_2], \, s_1=|L_1 \setminus L|, \, 
s_2=|L_2 \setminus (L \cup L_1)|
\end{align*}
and similarly $$t_1=|E(H_1) \setminus E(H)|, \, t_2=|E(H_2) \setminus E(H_1)|.$$
Let us prove that 
\begin{align}\label{tisi}
 t_i\geq 2 s_i+1 \text{ for } i \in \{1,2\}.
\end{align}
This inequality can be obtained immediately for any $i$ with $s_i \in \{1,2\}$ by counting the edges incident to the vertices counted by $s_i$.
We have $0 \leq s_1,s_2\leq 2$ and $s_1 \neq 0$, hence for the proof of \eqref{tisi} it is left to check $i=2$ in case of $s_2=0$. It  is easy to confirm that $\mathcal{C}_{H_1}=\{L,L_1\}$, thus $L_2$ induces an edge $e \notin E(H_1)$, i.e. $t_2\geq 1=2s_2+1$. This completes the proof of \eqref{tisi} which yields \eqref{EM2M} due to
$$|E(M)|=6+t_1+t_2\geq 6+ 2 s_1+1 + 2s_2 +1=2(4+s_1+s_2)=2|M|.$$ 
\end{proof}

\section*{Acknowledgements}
I am very grateful to my supervisor Mathias Schacht for his time and encouragement.
I would like to thank Shagnik Das and Andrew Treglown for sharing a simplification that strengthened part \ref{0statementK4} of Theorem \ref{K4thm}.

\bibliography{bibthesis}
\bibliographystyle{SIAM}

\end{document}